\numberwithin{equation}{section}
\numberwithin{theorem}{section}
\numberwithin{remark}{section}
\renewcommand{\paragraph}[1]{\subsubsection{#1}}
\renewcommand{\cases}[1]{\left\{ \begin{array}{rl} #1 \end{array} \right.}
\newcommand{\smfrac}[2]{{\textstyle \frac{#1}{#2}}}
\def\XXint#1#2#3{{\setbox0=\hbox{$#1{#2#3}{\int}$ }
\vcenter{\hbox{$#2#3$ }}\kern-.6\wd0}}
\def\b{\big}
\def\B{\Big}
\def\bg{\bigg}
\def\sep{\,|\,}
\def\bsep{\,\b|\,}
\def\conv{{\rm conv}}
\def\id{{\rm id}}
\def\R{\mathbb{R}}
\def\N{\mathbb{N}}
\def\Z{\mathbb{Z}}
\def\WW{{\rm W}}
\def\CC{{\rm C}}
\def\LL{{\rm L}}
\def\dx{\,{\rm d}x}
\def\pp{\partial}
\def\<{\langle}
\def\>{\rangle}
\def\mA{{\sf A}}
\def\mF{{\sf F}}
\def\mG{{\sf G}}
\def\mI{{\sf I}}
\def\mJ{{\sf J}}
\def\mQ{{\sf Q}}
\def\eps{\varepsilon}
\def\ac{{\rm ac}}
\def\a{{\rm a}}
\def\c{{\rm c}}
\def\i{{\rm i}}
\newcommand{\Da}[1]{D_{#1}}
\newcommand{\Dc}[1]{\partial_{#1}}
\def\D{\partial}
\def\del{\delta}
\def\Is{\mathcal{I}}
\def\As{\mathcal{A}}
\def\Cs{\mathcal{C}}
\def\L{\mathcal{L}}
\def\Us{\mathscr{U}}
\def\Fs{\mathscr{F}}
\def\Fsc{\Fs_\Cs}
\def\Fsa{\Fs_\As}
\def\Fsi{\Fs_\Is}
\def\Ys{\mathscr{Y}}
\def\yF{y_\mF}
\def\E{\mathscr{E}}
\def\Ea{\E_\a}
\def\Ec{\E_\c}
\def\T{\mathscr{T}}
\def\Tc{\T_\Cs}
\def\Ta{\T_\As}
\def\Ti{\T_\Is}
\def\PO{{\rm P}_0}
\def\PI{{\rm P}_1}
\def\CR{{\rm N}_1}
\def\Eac{\E_\ac}
\def\Sa{\Sigma_\a}
\def\Sc{\Sigma_\c}
\def\Sac{\Sigma_\ac}
\def\Sacm{\widehat{\Sigma}_\ac}
\def\RO{\mathcal{R}}
\def\Vc{V^\c}
\def\Vi{V^\i}
\def\Vac{V^\ac}
\def\Vs{\mathscr{V}}
\definecolor{cocol}{rgb}{0.7, 0, 0}
\definecolor{lzcol}{rgb}{0, 0, 0.7}
\begin{document}

\title[]{Construction and sharp consistency estimates for
  atomistic/continuum coupling methods with general interfaces: a 2D
  model problem}

\author{C. Ortner}
\address{C. Ortner\\ Mathematics Institute \\ Zeeman Building \\
  University of Warwick \\ Coventry CV4 7AL \\ UK}
\email{c.ortner@warwick.ac.uk}

\author{L. Zhang}
\address{L. Zhang \\ Mathematical Institute\\
  24-29 St Giles' \\ Oxford OX1 3LB \\ UK}
\email{zhang@maths.ox.ac.uk}

\date{\today}

\thanks{This work was supported by EPSRC Grant ``Analysis of
  Atomistic-to-Continuum Coupling Methods'' and the EPSRC Critical
  Mass Programme ``New Frontiers in the Mathematics of Solids''
  (OxMoS).}

\subjclass[2000]{65N12, 65N15, 70C20}

\keywords{atomistic models, quasicontinuum method, coarse graining}

\begin{abstract}
  We present a new variant of the geometry reconstruction approach for
  the formulation of atomistic/continuum coupling methods (a/c
  methods). For multi-body nearest-neighbour interactions on the 2D
  triangular lattice, we show that patch test consistent a/c methods
  can be constructed for arbitrary interface geometries. Moreover, we
  prove that all methods within this class are first-order consistent
  at the atomistic/continuum interface and second-order consistent in
  the interior of the continuum region.
\end{abstract}

\maketitle


\section{Introduction}
Atomistic/continuum coupling methods (a/c methods) are a class of
coarse-graining techniques for the efficient simulation of atomistic
systems with localized regions of interest interacting with long-range
elastic effects that can be adequately described by a continuum
model. We refer to \cite{Miller:2008}, and references therein, for an
introduction and discussion of applications.

In the present work we are concerned with the construction and
rigorous analysis of energy-based a/c methods in a 2D model
problem. Our starting point is the geometry reconstruction approach
proposed by Shimokawa {\it et al}~\cite{Shimokawa:2004} and by E, Lu
and Yang \cite{E:2006} for the construction of ``consistent'' a/c
methods in 2D and 3D. We propose a new variant of that approach to
define a modified site potential at the a/c interface, which has
several free parameters. We then ``fit'' these parameters so that the
resulting a/c hybrid energy satisfies an energy consistency condition
and a force consistency condition (see \eqref{eq:energy_cons} and
\eqref{eq:force_cons} for the precise definition of these terms; in
the terminology of quasicontinuum methods our hybrid energy is free of
ghost forces).

Explicit constructions along these lines can be found in
\cite{Shimokawa:2004} for pair potentials and in \cite{E:2006} for
coupling a finite-range multi-body potential to a nearest-neighbour
potential, for high-symmetry interfaces. Our focus in the present work
is the coupling to a continuum model and interfaces with corners; both
of these cases are only briefly touched upon in \cite{E:2006}.

In recent years there has been considerable activity in the numerical
analysis literature on the classification and rigorous analysis of a/c
methods (see \cite{Dobson:2008b, Dobson:qce.stab, emingyang,
  Ortner:qnl.1d, OrtnerWang:2009a} and references therein). Much of
this work has been restricted to one-dimensional problems; only very
recently some progress has been made on the analysis of a/c methods in
2D and 3D \cite{LuMing:2011pre, Ortner:2011:patch, OrtShap:2011a}.

The first rigorous error estimates for the method proposed in
\cite{E:2006} (together with a wider class of related methods), in
more than one dimension, are presented in \cite{Ortner:2011:patch} for
2D finite range multi-body interactions. The work
\cite{Ortner:2011:patch} {\it assumes} the existence of an interface
potential so that the resulting a/c energy satisfies certain energy
and force consistency conditions (a variant of the {\it patch test})
and then established first-order consistency of the resulting a/c
method in negative Sobolev norms.

Several important questions remain open: 1. It is yet unclear whether
constructions of the type proposed in \cite{E:2006, Shimokawa:2004}
can be carried out for interfaces with corners. 2. The error
estimates in \cite{Ortner:2011:patch} contain certain non-local terms
that enforce unnatural assumptions (e.g., connectedness of the
atomistic region). 3. Moreover, this nonlocality causes suboptimal
error estimates; namely, it destroys the second-order consistency of
the Cauchy--Born model (see, e.g., \cite{Dobson:2008b, E:2007a,
  Ortner:qnl.1d}), and an unnatural dependence of the interface width
enters the error estimates. (Moreover, we note that the error
estimates in \cite{OrtShap:2011a} for a different a/c method are only
first-order as well.)

The purpose of the present work is to investigate for a model problem
whether these restrictions are genuine, or of a technical nature.  To
that end we formulate an atomistic model on the 2D triangular lattice
with nearest-neighbour multi-body interactions (effectively these are
third neighbour interactions), and construct new a/c methods in the
spirit of \cite{E:2006, Shimokawa:2004}. We then prove that the
resulting methods are all first-order consistent in the interface
region and second-order consistent in the interior of the continuum
region, which is the first generalisation of the optimal
one-dimenional result \cite[Theorem 3.1]{Ortner:qnl.1d} to two
dimensions.

Although it may seem restrictive at first glance to consider only
nearest-neighbour potentials, we note that this is in fact an
important case to consider. For example, bond-angle potentials (which
are included in our analysis) usually consider only angles between
nearest-neighbour bonds. More generally, multi-body effects are
usually restricted to very small interaction neighbourhoods, while
long-range effects are often only displayed in pair potentials (in
particular, Lennard-Jones and Coulomb), which can be treated, for
example, using Shapeev's method~\cite{OrtShap:2011a, Shapeev:2010a,
  Shapeev:2011a}.


\section{Atomistic Continuum Coupling}
\label{sec:stress}

\subsection{Atomistic model}
\label{sec:flat:a}
We consider a nominally infinite crystal, but restrict admissible
displacements to those with compact support. Thus we avoid any
discussion of boundary conditions, which are unimportant for the
purpose of this work.

%
Let $\mQ_6$ denote a rotation through arclength $\pi/3$. As a
reference configuration we choose the triangular lattice (see also
Figure \ref{fig:lattice}):
\begin{align*}
  \L := \mA \Z^2, \qquad \text{where } &~\mA := (a_1, a_2), \\
  &~a_1 := (1, 0)^\top, \text{ and } a_j := \mQ_6^{j-1} a_1, j \in \Z.
\end{align*}
We will frequently use the following relationships between the vectors
$a_j$:
\begin{displaymath}
  a_{j+6} = a_{j}, \quad a_{j+3} = -a_j, \quad \text{and} \quad
  a_{j-1} + a_{j+1} = a_j \qquad \text{for all } j \in \Z.
\end{displaymath}
For future reference we also define $\mathbf{a} := (a_j)_{j=1}^6$, and
$\mF \mathbf{a} := (\mF a_j)_{j = 1}^6$, for $\mF \in \R^{2 \times 2}$.

\begin{figure}
  \begin{center}
    \includegraphics[height=3.5cm]{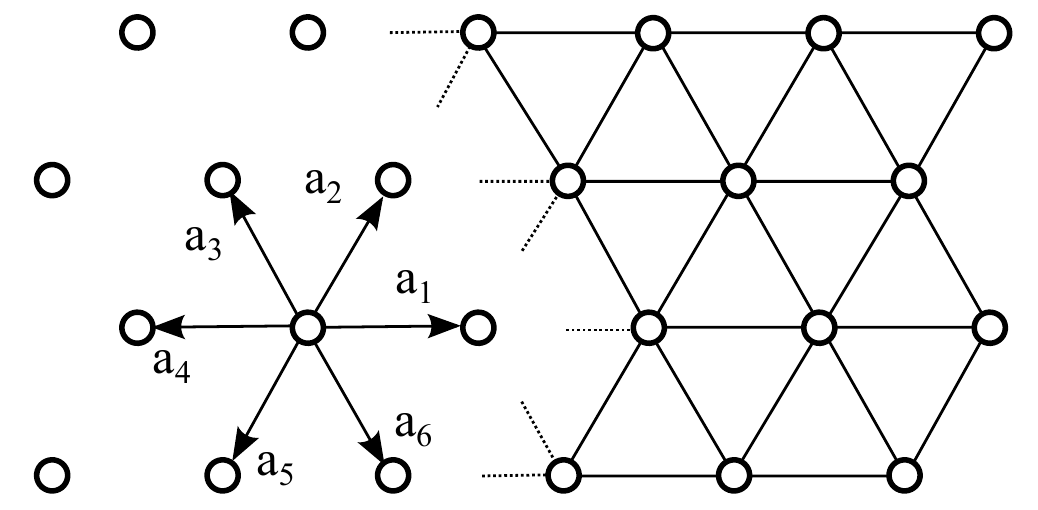}
  \end{center}
  \caption{ \label{fig:lattice} The 2D triangular lattice and its
    canonical triangulation. }
\end{figure}

Our choice of reference configuration is largely motivated by the fact
that $\L$ possesses a canonical triangulation (see Figure
\ref{fig:lattice}, and \S\ref{sec:flat:cb}), which will be convenient
in our analysis.

The set of displacements and deformations with compact support are
given, respectively, by
\begin{align*}
  \Us_0 :=~& \big\{ u : \L \to \R^2 : u(x) \neq 0  \text{ for at
    most finitely many } x \in \L \big\}, \quad \text{and} \\
  \Ys_0 :=~& \big\{ y : \L \to \R^2 : y - {\rm id} \in \Us_0 \big\}.
\end{align*}
We remark that deformations are usually required to be at least
invertible, but that we avoid this requirement by making simplifying
assumptions on the interaction potential.

A homogeneous deformation is a map $\yF : \L \to \R^2$, $\yF(x) := \mF
x$, where $\mF \in \R^{2 \times 2}$. We note that $\yF \notin \Ys_0$
unless $\mF = \mI$.

For a map $v : \L\to\R^k$, $k \in \N$, we define the forward finite
difference operator
\begin{displaymath}
\Da{j} v(x) := v(x+ a_j) - v(x), \quad x \in \L, j \in \Z,
\end{displaymath}
and we define the family of all nearest-neighbour finite differences
as $\Da{}y(x) := (\Da{j} y(x))_{j = 1}^6$.

We assume that the atomistic interaction is described by a
nearest-neighbour multi-body site energy potential $V \in \CC^3(\R^{2
  \times 6})$, with $V({\bf a}) = 0$, so that the energy of a
deformation $y \in \Ys_0$ is given by
\begin{displaymath}
  \Ea(y) := \sum_{x \in \L} V\big( \Da{} y(x) \big).
\end{displaymath}
The assumption $V({\bf a}) = 0$ guarantees that $\Ea(y)$ is finite for
all $y \in \Ys_0$.

\subsection{The Cauchy--Born approximation}
\label{sec:flat:cb}
For deformation fields $y \in \WW^{1,\infty}(\R^2; \R^2)$, such that
$y - \id$ has compact support, we define the Cauchy--Born energy
functional
\begin{displaymath}
  \Ec(y) := \int_{\R^2} W(\D y) \dx, \qquad \text{where} \quad
  W(\mF) := \smfrac{1}{\Omega_0}V\big( \mF \mathbf{a} \big),
\end{displaymath}
$W \in \CC^3(\R^{2 \times 2}; \R)$, is the {\em Cauchy--Born stored
  energy function}.
The factor $\Omega_0:=\sqrt{3}/2$ is the volume of one primitive cell
of $\L$, that is, $W(\mF)$ is the energy per unit volume of the
lattice~$\mF \L$.


If $y \in \Ys_0$ is a {\em discrete} deformation, then we define its
Cauchy--Born energy through piecewise affine interpolation: The
triangular lattice $\L$ has a canonical triangulation $\T$ into closed
triangles depicted in Figure \ref{fig:lattice}.
Henceforth, we shall always identify a function $v : \L \to \R^k$ with
its $\PI$-interpolant, which belongs to
$\WW^{1,\infty}(\R^2;\R^k)$. For a discrete deformation $y \in \Ys_0$,
we can then write the Cauchy--Born energy as
\begin{equation}
  \label{eq:defn_Ec_W}
  \Ec(y) = \int_{\R^2} W(\D y) \dx 
  = \sum_{T \in \T} |T| W(\Dc{T} y),
\end{equation}
where we define $\Dc{T} y := \D y(x)|_{x \in T}$ and note that $|T| =
\Omega_0/2$ for all triangles $T \in \T$.

Note that $W(\mI) = 0$ and hence $\Ec(y)$ is finite for all $y \in
\Ys_0$.

Alternatively, $\Ec$ can be written in terms of site energies, which
will be helpful for the definition of a/c methods. Each vertex $x \in
\L$ has six adjacent triangles, which we denote by $T_{x, j} :=
\conv\{ x, x+a_j, x+a_{j+1}\}$, $j = 1,\dots, 6$ (cf. Figure
\ref{fig:Txj_xTj}). With this notation, 
\begin{equation}
  \label{eq:defn_Vc}
  \Ec(y) = \sum_{x \in \L} \Vc(\Da{} y(x)), \qquad \text{where} \quad
  \Vc(\Da{}y(x)) := \frac{\Omega_0}{6} \sum_{j = 1}^6 W(\Dc{T_{x,j}}
  y).
\end{equation}
Note that $\Vc \in \CC^3(\R^{2 \times 6})$ is well-defined since
$\Dc{T_{x,j}} y$ is determined by the finite differences $\Da{j} y(x)$
and $\Da{j+1} y(x)$.

\subsection{A/c coupling via geometry reconstruction}
\label{sec:flat:gcqc}
Let $\As \subset \L$ denote the set of all lattice sites for which we
require full atomistic accuracy. We denote the set of interface
lattice sites by
\begin{displaymath}
  \Is := \b\{ x \in \L \setminus \As \bsep x + a_j \in \As \text{ for some } j \in \{1,\dots,6\}\b\},
\end{displaymath}
and we denote the remaining lattice sites by $\Cs := \L \setminus (\As
\cup \Is)$; cf. Figure \ref{fig:acmethod}.

\begin{figure}
  \begin{center}
    \includegraphics[height=3cm]{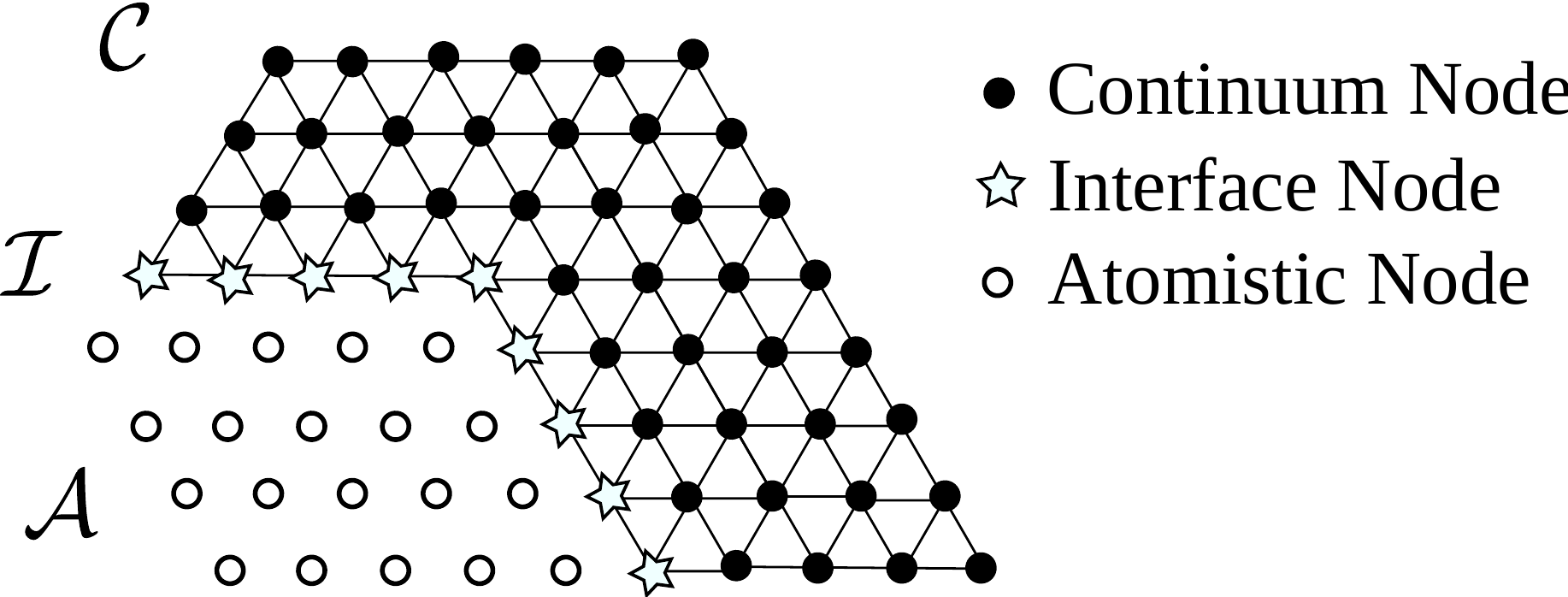}
   \end{center}
   \caption{Atomistic-interface-continuum domain decomposition.}
   \label{fig:acmethod}
\end{figure}

A general form for the constuction of a/c coupling energies is
\begin{equation}
  \label{eq:Eac_general}
  \Eac(y) = \sum_{x \in \As} V(\Da{}y(x)) + \sum_{x \in \Is}
  \Vi_x(\Da{}y(x)) + \sum_{x \in \Cs} \Vc(\Da{}y(x)),
\end{equation}
where $\Vi_x, x \in \Is$, are the interface site potentials that
define the method (the atomistic site potential and the continuum site
potential are determined by the atomistic model).

For example, if we choose $\Vi_x = V$, then we obtain the original
{\em quasicontinuum method} \cite{Ortiz:1995a} (the QCE method). It is
well understood that the QCE method suffers from the occurance of
ghost forces, which result in large modelling errors
\cite{Dobson:2008b, Miller:2008, emingyang, OrtnerWang:2009a,
  Shenoy:1999a}. 

In the following we present a new variant of the geometry
reconstruction approach \cite{E:2006, Shimokawa:2004}  for constructing $\Vi$.
%
%
We define the interface potential as
\begin{equation}
  \label{eq:VI_gc}
  \Vi_x(\Da{}y(x)) := V(\RO_x\Da{}y(x)),
\end{equation}
where $\RO_x$ is a {\em geometry reconstruction operator} of the
general form
\begin{equation}
  \label{eq:flat:reconop}
  \RO_x \Da{} y(x) := \b( \RO_x \Da{j} y(x) \b)_{j = 1}^6, \quad
  \text{and} \quad
  \RO_x \Da{j}y(x) :=\sum_{i = 1}^6 C_{x,j,i}\Da{i}y(x).
\end{equation}
Here $(C_{x, j, i})_{j, i = 1}^6$, $x \in \Is$, are free parameters
of the method that can be determined to improve the accuracy of the
coupling scheme.

We use the acronym ``GR-AC method'' (geometry reconstruction-based
atomistic-to-continuum coupling method) to describe methods of the
type \eqref{eq:Eac_general} where the interface site potential is of
the form \eqref{eq:VI_gc}.

We aim to determine parameters $C_{x,j,i}$ such that the coupling
energy $\Eac$ satisfies the following conditions, which we label,
respectively, {\it local energy consistency} and {\it local force
  consistency}:
\begin{align}
  \label{eq:energy_cons}
  \Vi_x( \mF {\bf a} ) = V( \mF {\bf a} )~& \qquad \forall \mF \in
  \R^{2 \times 2}, \quad \forall x \in \Is, \qquad \text{and} \\
  \label{eq:force_cons}
  f_\ac( x; \yF ) = 0 ~& \qquad \forall \mF \in \R^{2 \times 2},
  \quad \forall x \in \L,
\end{align}
where $f_\ac(x;y)$ is the force acting on the atom at site $x$,
initially defined by
\begin{displaymath}
  f_\ac(x; y) := - \frac{\pp \Eac(y)}{\pp y(x)} \in \R^2 \qquad
  \text{for } y \in \Ys_0;
\end{displaymath}
however, we immediately see that $f_\ac$ involves only a sum over a
finite set of lattice sites, and hence the formula can be extended to
all maps $y : \L \to \R^2$. In particular, \eqref{eq:force_cons} is a
well-posed condition. Taken together, we call \eqref{eq:energy_cons}
and \eqref{eq:force_cons} the {\em patch test}. A hybrid energy $\Eac$
of the form \eqref{eq:Eac_general} is called {\em patch test consistent}
if it satisfies both conditions.

In the remainder of the paper, we will determine choices of the
parameters $C_{x,j,i}$ for general a/c interface geometries that give
patch test consistent coupling methods. Moreover, we will prove that
for all parameter choices we determine, the resulting a/c method is
first-order consistent at the interface and second-order consistent in
the interior of the continuum region. This extends the optimal 1D
result in \cite{Ortner:qnl.1d}.

\begin{remark}
  1. To obtain a method with improved complexity one should use a
  coarser finite element discretisation in the continuum region. It
  was seen in \cite{OrtnerWang:2009a, Ortner:2011:patch} that the
  coarsening step can be understood using standard finite element
  methodology, and hence we focus only on the modification of the
  model, and the resulting {\em modelling errors}.

  2.  Realistic interaction potentials have singularities for
  colliding nuclei, i.e., for deformations that are not
  injective. Clearly, our assumption that $V \in \CC^3(\R^{2 \times
    6})$ contradicts this. It is conceptually easy to admit more
  general site potentials in our work, however, this would introduce
  additional technical steps that are of little relevance to the
  problems we wish to study.
\end{remark}

%
%

\subsection{Additional assumptions and notation}
We use $|\cdot|$ to denote the $\ell^2$-norm on $\R^n$, and the
Frobenius norm on $\R^{n \times m}$. Generic constants that are
independent of the potential (and the constants defined in the
following paragraphs) and the underlying deformations are denoted by
$c$. Although it is possible in principle to trace all constants in
our proofs, it would require additional non-trivial computations to
optimize them.

\subsubsection{Properties of $V$}
\label{sec:prop_V}
We define notation for partial derivatives of $V$, for ${\bf g} \in
\R^{2 \times 6}$, as follows:
\begin{displaymath}
  \partial_j V(\mathbf{g}):=\frac{\pp V(\mathbf{g})}{\pp g_j} \in\R^2,
  \quad\text{and}\quad
  \partial_{i,j}V(\mathbf{g}):=\frac{\pp^2 V(\mathbf{g})}{\pp g_i\pp
    g_j} \in\R^{2\times 2}, 
  \quad\text{for } i, j \in \{1,\dots,6\},
\end{displaymath}
and similarly, the third derivative $\pp_{i,j,k} V({\bf g}) \in
\R^{2\times 2 \times 2}$, which we will never use explicitly. We will
frequently also use the short-hand notation
\begin{displaymath}
  V_{x,j} := \pp_j V(Dy(x)), \quad V_{T, j} := \pp_j V((\Dc{T} y) {\bf
    a}), \quad \text{and} \quad
  V_{\mF, j} := \pp_j V(\mF {\bf a}),
\end{displaymath}
as well as analogous notation for second derivatives and for the site
potentials $\Vc$, $\Vi$, and for $\Vac$, which is defined in
\eqref{eq:defn_Vac}.

Interpreting the second and third partial derivatives as
multi-linear forms we define the global bounds
\begin{align*}
  M_2 :=~& \sum_{i,j = 1}^6 
  \sup_{{\bf g} \in \R^{2 \times 6}} \sup_{\substack{h_1, h_2
      \in \R^2 \\ |h_1| = |h_2| = 1}} \pp_{i,j} V({\bf g})[h_1, h_2],
    \quad \text{and} \\[-2mm]
  M_3 :=~& \sum_{i,j,k = 1}^6 
  \sup_{{\bf g} \in \R^{2 \times 6}} \sup_{\substack{h_1, h_2, h_3
      \in \R^2 \\ |h_1| = |h_2| = |h_3| = 1}} \pp_{i,j,k} V({\bf g})[h_1, h_2, h_3].
\end{align*}
With this notation it is straightforward to show that
\begin{equation}
  \label{eq:Lip_partialV}
  \sum_{i = 1}^6 \b|\pp_iV(\mathbf{g})-\pp_iV(\mathbf{h})\b| \leq
  M_2 \max_{j = 1, \dots, 6} |g_j - h_j|, \qquad \text{ for } {\bf g},
  {\bf h} \in \R^{2 \times 6}.
\end{equation}

We also assume that $V$ satisfies the point symmetry
\begin{equation}
  \label{eq:pt_symm}
  V\b( (-g_{j+3})_{j = 1}^6 \b) = V( {\bf g} ) \qquad \forall {\bf g}
  \in \R^{2 \times 6}.
\end{equation}
The following identities are immediate consequences of this condition:
\begin{align}
  \label{eq:pt_symm_D}
  \pp_{i}V(\mF{\bf a}) =~& - \pp_{i+3} V(\mF{\bf a}), \qquad \text{for } i =
  1, \dots, 6,  \quad \mF \in \R^{2 \times 2}\\
  \label{eq:pt_symm_D2}
  \pp_{ij} V(\mF{\bf a}) =~& \pp_{i+3,j+3} V(\mF{\bf a}), \qquad
  \text{for } i, j = 1, \dots, 6, \quad \mF \in \R^{2 \times 2}.
\end{align}
We will prove results on the class $\Vs$, of all site potentials that
satisfy \eqref{eq:pt_symm}, 
\begin{displaymath}
  \Vs := \b\{ V \in \CC^3(\R^{2\times 6}) \bsep \text{ $V$ satisfies
    \eqref{eq:pt_symm}}\, \b\}.
\end{displaymath}

We will frequently use the following shorthand notation for partial
derivatives of $V$, when there is no ambiguity in their meaning:
\begin{displaymath}
  V_{x, j} := \pp_j V(Dy(x)), \qquad
  V_{\mF, j} := \pp_j V(\mF{\bf a}), \qquad
  V_{T, j} := V_{\Dc{T} y, j},
\end{displaymath}
and analogous symbols for other potentials that we will
introduce throughout the text. 

\subsubsection{Linear functionals}
For $y \in \Ys_0$ and $u \in \Us_0$ we denote the directional derivative
of $\Ea$ by
\begin{displaymath}
  \b\< \del\Ea(y), u \b\> := \lim_{t \to 0} \frac{\Ea(y+t u) - \Ea(y)}{t}.
\end{displaymath}
We call $\del\Ea(y)$ the {\it first variation} of $\Ea$ and understand
it as an element of $\Us_0^*$. We use analogous notation for other
functionals. This paper is largely concerned with establishing bounds
on the {\it modelling error} $\del\Ea(y) - \del\Eac(y)$.

To obtain sharp error estimates in $\WW^{1,p}$-like norms, one needs
to bound modelling errors in negative Sobolev norms, or, in our case,
discrete verions thereof. Let $\ell : \Us_0 \to \R$ be a linear
functional, and let $\frac1p + \frac{1}{p'} = 1$, $1 \leq p, p' \leq
\infty$, then we define
\begin{displaymath}
  \| \ell \|_{\Us^{-1,p}} := \sup_{\substack{u \in \Us_0 \\ \| \D u \|_{\LL^{p'}} =1}} 
  \b\<\ell, u \b\>.
\end{displaymath}

\subsubsection{Notation for the lattice and the triangulation}
$\L$ is the set of vertices of $\T$, and we denote the set of edges of
$\T$ by $\Fs$, with edge midpoints $m_f$, $f \in \Fs$.

For each vertex $x \in \L$ and direction $a_j$, let $T_{x,j} :=
\conv\{ x, x+a_j, x+a_{j+1}\} \in \T$, $j = 1,\dots,6$ (see
Figure~\ref{fig:Txj_xTj}). The edge $(x, x+a_j)$ is the intersection
of the two elements $T_{x,j}$ and $T_{x, j-1}$. Moreover, let $x_{T,
  j} \in \L$ be the unique lattice point so that both $x_{T,j},
x_{T,j}+a_j \in T$ (again, see Figure \ref{fig:Txj_xTj}).
%

\begin{figure}
  \begin{center}
    \includegraphics[height= 0.2\textwidth]{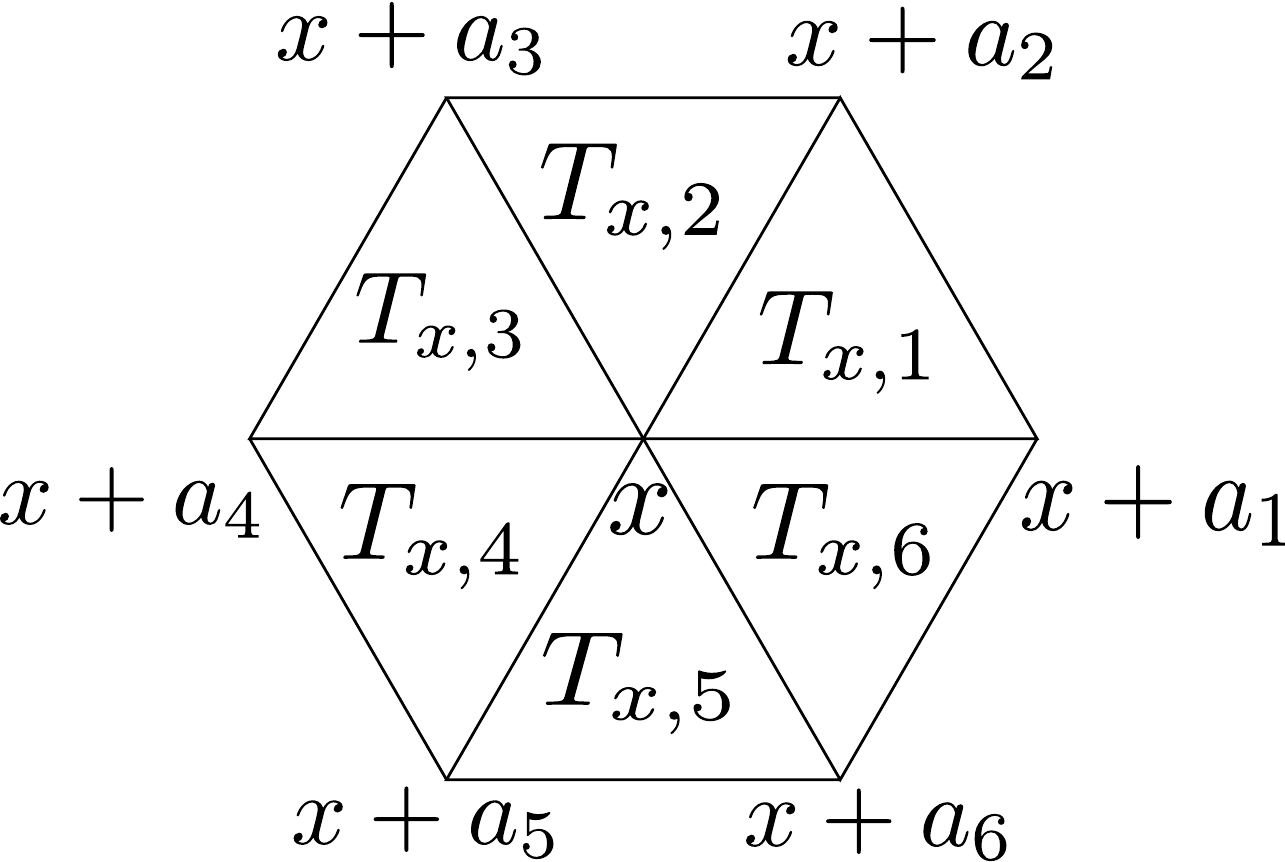} \qquad 
    \includegraphics[height= 0.15\textwidth]{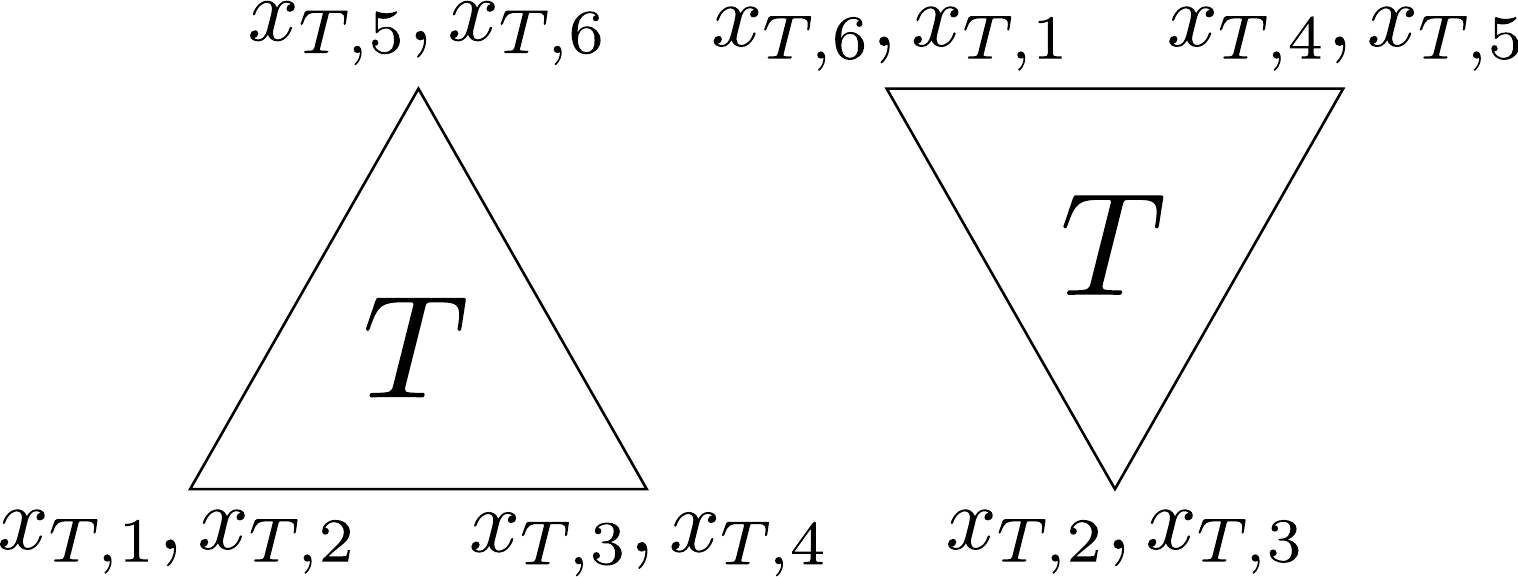}
   \end{center}
   \caption{Convention for the symbols $T_{x,j}$ and $x_{T,j}$.}
  \label{fig:Txj_xTj}
\end{figure}

\subsubsection{Discrete regularity}
To measure regularity or ``smoothness'' of discrete deformations $y
\in \Ys_0$, we first define the symbols
\begin{displaymath}
  |D^2 y(x)| := \max_{i,j = 1, \dots, 6} |D_iD_j y(x)|, \quad
  \text{and} \quad
  |D^3 y(x)| := \max_{i,j,k = 1, \dots, 6} |D_iD_jD_k y(x)|, 
  \quad \text{for } x \in \L.
\end{displaymath}
With mild abuse of notation, we then define the norms
\begin{displaymath}
  \| D^2 y \|_{\ell^p(\mathcal{A})} := \| |D^2 y| \|_{\ell^p(\mathcal{A})}, \quad \text{and}
  \quad \| D^3 y \|_{\ell^p(\mathcal{A})} := \| |D^3 y| \|_{\ell^p(\mathcal{A})},
\end{displaymath}
for any $\mathcal{A} \subset \L$ and $y \in \Ys_0$. If the label
$\mathcal{A}$ is omitted, then it is assumed that $\mathcal{A} = \L$.


\section{Construction of the GR-AC Method}
\label{sec:construstion}
In this section we carry out an explicit construction of the GR-AC
method. Our results are variants of results in \cite{E:2006}, however,
since our ansatz is different from the one used in \cite{E:2006}, and
since we wish to be precise about the equivalence of certain
conditions, we provide details for all our proofs.

We assume throughout the remainder of the paper that the reconstructed
difference $\RO_x\Da{j}y(x)$ may depend only on the original
differences $\Da{j-1}y(x), \Da{j}y(x)$, and $\Da{j+1}y(x)$, that is,
\begin{equation}
  \label{eq:simple_Cxji}
  C_{x,j,i} = 0 \quad \text{for } \b| (i-j) {\rm~mod~} 6 \b| > 1,
  \qquad i, j \in \{1,\dots,6\}, \quad x \in \Is.
\end{equation}
For future reference, we call \eqref{eq:simple_Cxji} the {\it
  one-sidedness condition}.

In \S\ref{sec:construction:e_cons} and \S\ref{sec:construction:patch}
we derive general conditions on the parameters that are independent of
the choice of the atomistic region. In \S\ref{sec:construction:flat}
and \S\ref{sec:construction:corners} we then compute explicit sets of
parameters.

\subsection{Conditions for local energy consistency}
\label{sec:construction:e_cons}
We first derive conditions for the local energy consistency condition
\eqref{eq:energy_cons}. 

\begin{proposition}
  \label{th:econs}
  Suppose that the parameters $C_{x,j,i}$ satisfy the one-sidedness
  condition~\eqref{eq:simple_Cxji}, then the interface potential
  $\Vi_x$ satisfies the local energy consistency condition
  \eqref{eq:energy_cons} for all potentials $V \in \Vs$ if and only if
  \begin{equation}
    \label{eq:equiv_econs}
    C_{x,j,j-1} = C_{x,j,j+1} = 1 - C_{x,j,j}, \qquad \text{for } j = 1, \dots, 6.
  \end{equation}
\end{proposition}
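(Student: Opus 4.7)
My plan is to prove the two implications separately. For the ``if'' direction, I would substitute \eqref{eq:equiv_econs} into the formula for the reconstructed $j$-th difference on a homogeneous deformation,
\[
  \RO_x \Da{j} \yF(x) = \mF\b( C_{x,j,j-1}\,a_{j-1} + C_{x,j,j}\,a_j + C_{x,j,j+1}\,a_{j+1} \b),
\]
and invoke the hexagonal identity $a_{j-1}+a_{j+1} = a_j$. The bracket then collapses to $a_j$, so $\RO_x\,\mF{\bf a} = \mF{\bf a}$ and \eqref{eq:energy_cons} follows at once from $\Vi_x(\mF{\bf a}) = V(\RO_x\mF{\bf a})$.

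The ``only if'' direction is the main content. Set $c_{x,j} := \sum_{i} C_{x,j,i}\,a_i$ so that $\RO_x \Da{j}\yF(x) = \mF c_{x,j}$. The idea is to exploit the freedom in $V \in \Vs$ to test the identity $V(\mF c_{x,1},\ldots,\mF c_{x,6}) = V(\mF a_1,\ldots,\mF a_6)$ against potentials that isolate a single pair of opposite slots. For each fixed index $j$ I would consider the family
\[
  V_\psi({\bf g}) := \psi(g_j) + \psi(-g_{j+3}), \qquad \psi \in \CC^3(\R^2;\R);
\]
since the substitution $g_k \mapsto -g_{k+3}$ merely swaps the two summands, one checks that $V_\psi\in\Vs$. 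Taking $\mF = \id$ and using $a_{j+3} = -a_j$ reduces the consistency condition to
\[
  \psi(c_{x,j}) + \psi(-c_{x,j+3}) = 2\psi(a_j) \qquad \forall\, \psi \in \CC^3(\R^2;\R),
\]
and specialising to the quadratic $\psi(v) := |v - a_j|^2$ forces both squared terms on the left to vanish, giving $c_{x,j} = a_j$ for every $j$.

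To finish, I would reinterpret $c_{x,j} = a_j$ as conditions on the parameters. Rewriting the right-hand side via $a_j = a_{j-1} + a_{j+1}$ and using that $\{a_{j-1}, a_{j+1}\}$ is a basis of $\R^2$ (two unit vectors separated by angle $2\pi/3$), matching coefficients in
\[
  (C_{x,j,j-1} + C_{x,j,j})\,a_{j-1} + (C_{x,j,j} + C_{x,j,j+1})\,a_{j+1} = a_{j-1} + a_{j+1}
\]
yields $C_{x,j,j-1} + C_{x,j,j} = 1 = C_{x,j,j} + C_{x,j,j+1}$, which is precisely \eqref{eq:equiv_econs}. The only nontrivial step is the choice of test family in the necessity direction: one needs potentials in $\Vs$ that decouple a single pair of opposite slots while remaining rich enough to separate points in $\R^2$, and the symmetrised ansatz $V_\psi$ accomplishes both with essentially no residual constraint on $\psi$; everything else is elementary linear algebra on the triangular lattice.
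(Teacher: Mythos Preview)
Your proof is correct and follows essentially the same route as the paper: both arguments reduce \eqref{eq:energy_cons} to the pointwise identity $\sum_i C_{x,j,i}\,a_i = a_j$, then rewrite $a_j = a_{j-1}+a_{j+1}$ and read off \eqref{eq:equiv_econs} from the linear independence of $a_{j-1},a_{j+1}$. The only difference is in how that reduction is justified: the paper asserts directly that requiring $V(\RO_x\mF{\bf a}) = V(\mF{\bf a})$ for all $V\in\Vs$ forces the arguments to agree and then varies $\mF$, whereas you fix $\mF=\id$ and exhibit the explicit symmetrised test potentials $V_\psi$ with $\psi(v)=|v-a_j|^2$ to make that step fully rigorous --- a welcome clarification, but not a different strategy.
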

\begin{proof}
  We require that $\Vi_x(\mF{\bf a}) = V(\mF{\bf a})$, for arbitrary
  $V\in\Vs$, which is equivalent to
  \begin{displaymath}
    \mF a_j = \sum_{i = 1}^6 C_{x,j,i} \mF a_{i} \qquad \text{for } j =
    1, \dots, 6.
  \end{displaymath}
  Since this has to hold for arbitrary $\mF \in \R^{2 \times 2}$, and
  in view of \eqref{eq:simple_Cxji}, we obtain the condition
  \begin{displaymath}
    a_j = C_{x,j,j-1} a_{j-1} + C_{x,j,j} a_j + C_{x, j, j+1} a_{j+1}
  \end{displaymath}
  Since $a_j = a_{j-1} + a_{j+1}$, this is equivalent to
  \begin{displaymath}
    (C_{x,j,j-1} + C_{x,j,j} - 1) a_{j-1} +     (C_{x,j,j+1} +
    C_{x,j,j} - 1) a_{j-1}  = 0,
  \end{displaymath}
  and since $a_{j-1}, a_{j+1}$ are linearly independent, we obtain the
  condition that
  \begin{displaymath}
    C_{x,j,j-1} + C_{x,j,j} = 1, \quad \text{and} \quad
    C_{x,j,j+1} + C_{x,j,j} = 1.    
  \end{displaymath}
  Subtracting these two conditions gives $C_{x,j,j+1} = C_{x,j,j-1}$,
  and hence we obtain~\eqref{eq:equiv_econs}.
\end{proof}

As a consequence of Assumption \eqref{eq:simple_Cxji}, and Proposition
\ref{th:econs}, we have reduced the number of free parameters to six
for each site $x \in \Is$. To simplify the subsequent notation,
whenever the parameters $C_{x,j,i}$ are chosen to satisfy
\eqref{eq:equiv_econs}, we will write
\begin{equation}
  \label{eq:simple_C_econs}
  C_{x,j} := C_{x,j,j}, \quad \text{and note that } C_{x,j,j-1} =
  C_{x,j,j+1} = 1-C_{x,j}.
\end{equation}
Since it is equivalent to \eqref{eq:energy_cons} we call
\eqref{eq:simple_C_econs} the local energy consistency condition as
well.


\subsection{Conditions for local force consistency}
\label{sec:construction:patch}
We rewrite $\Eac$ in terms of a hybrid site potential
\begin{equation}
  \label{eq:defn_Vac}
  \Eac(y) = \sum_{x \in \L} \Vac_x(Dy(x)), \qquad \text{where} 
  \quad
  \Vac_x({\bf g}) :=
  \cases{
    \Vc({\bf g}),&x\in\Cs,\\
    \Vi_x({\bf g}),&x\in\Is,\\
    V({\bf g}),&x\in\As.
  }
\end{equation}

\begin{lemma}
  \label{th:fac_v1}
  Suppose that the parameters $(C_{x,j,i})_{i,j=1}^6, x \in \Is$,
  satisfy the one-sidedness condition \eqref{eq:simple_Cxji} and local
  energy consistency \eqref{eq:simple_C_econs}. Moreover, let
  \begin{equation}
    \label{eq:C_At_CB}
    C_{x,j} := 1 \quad \text{for } x \in \As \quad
    \text{and} \quad
    C_{x,j} := 2/3 \quad \text{for } x \in \Cs, \qquad 
    j = 1, \dots,6,
  \end{equation}
  and let $(C_{x,j,i})_{i,j = 1}^6$, $x \in \As \cup \Cs$, be defined
  to be compatible with \eqref{eq:simple_Cxji} and
  \eqref{eq:simple_C_econs}; then
  \begin{equation}
    \label{eq:fac_v1}
    -f^\ac(x; \mF\id) = \sum_{j=1}^6\sum_{i=1}^6 (C_{x-a_{i},j,i} -
    C_{x,j,i}) V_{\mF,j}  \qquad \forall x \in \L.
  \end{equation}
\end{lemma}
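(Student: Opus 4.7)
The plan is to compute $\pp \Eac(y)/\pp y(x)|_{y=\yF}$ directly via the chain rule applied to the site-energy representation $\Eac(y) = \sum_{x'\in\L} \Vac_{x'}(\Da{} y(x'))$. My starting observation is that $\Vac_{x'}(\Da{} y(x'))$ depends on $y(x)$ only in the seven cases $x' = x$ (where $y(x)$ appears in every difference $\Da{k} y(x)$, contributing $\pp\Da{k} y(x)/\pp y(x) = -\id$) and $x' = x - a_i$, $i = 1,\dots,6$ (where $y(x)$ appears only in the single difference $\Da{i} y(x - a_i)$, contributing $+\id$). This reduces the force computation to seven concrete contributions that can be evaluated individually.

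The next step is to unify the three regional cases via the formulas \eqref{eq:C_At_CB} and \eqref{eq:simple_C_econs}. For $x'\in\Is\cup\As$ the identification $\Vac_{x'}(\cdot) = V(\RO_{x'} \cdot)$ is literal from the definitions. For $x'\in\Cs$ the site potential $\Vc$ is \emph{not} literally of this form, and here the crux of the proof is to establish the identity
\begin{equation*}
  \pp_\ell \Vc(\mF{\bf a})
    \;=\; \tfrac{1}{3} V_{\mF,\ell-1}
       + \tfrac{2}{3} V_{\mF,\ell}
       + \tfrac{1}{3} V_{\mF,\ell+1}
    \;=\; \sum_{j=1}^{6} C_{x',j,\ell}\, V_{\mF,j},
\end{equation*}
which says that at the linearized level at $\mF{\bf a}$, $\Vc$ behaves exactly as the reconstructed potential $V\circ\RO^C_{x'}$ with $C_{x',j}=2/3$ would.

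Given this unification, the local energy consistency condition \eqref{eq:equiv_econs} (together with the extended choices in \eqref{eq:C_At_CB}) yields $\RO_{x'}\Da{} y(x')|_{y=\yF} = \mF{\bf a}$ for every $x'\in\L$, hence $\pp_j V(\RO_{x'}\Da{} y(x'))|_{y=\yF} = V_{\mF,j}$. Combining the chain rule with the sign factors noted above gives
\begin{equation*}
  \frac{\pp\, \Vac_{x'}(\Da{} y(x'))}{\pp y(x)}\bigg|_{y=\yF}
   = \begin{cases}
     -\sum_{j=1}^6 V_{\mF,j} \sum_{i=1}^6 C_{x,j,i}, & x' = x, \\
     +\sum_{j=1}^6 V_{\mF,j}\, C_{x-a_i,j,i}, & x' = x - a_i.
   \end{cases}
\end{equation*}
Summing the seven contributions collapses to $\pp\Eac/\pp y(x)|_{y=\yF} = \sum_j \sum_i (C_{x-a_i,j,i} - C_{x,j,i}) V_{\mF,j}$, which is \eqref{eq:fac_v1}.

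The main obstacle is verifying the continuum identity for $\pp_\ell \Vc(\mF{\bf a})$. Its proof requires expressing each triangle gradient $\Dc{T_{x',j}} y$ as an affine combination of $\Da{j} y(x')$ and $\Da{j+1} y(x')$ via the relations $a_{j-1} = a_j - a_{j+1}$ and $a_{j+3} = -a_j$, differentiating the averaged expression $\Vc({\bf g}) = \tfrac{1}{6}\sum_{j=1}^6 V(\Dc{T_{x,j}}y\,{\bf a})$ with respect to $g_\ell$, and then collapsing the resulting linear combination of $V_{\mF,1},\dots,V_{\mF,6}$ into the claimed three-term sum by invoking the point symmetry \eqref{eq:pt_symm_D}. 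The remaining steps are straightforward applications of the chain rule.
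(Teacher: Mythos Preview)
Your proof is correct and follows essentially the same route as the paper: both arguments reduce to the key identity $\pp_i \Vac_x(\mF{\bf a}) = \sum_{j=1}^6 C_{x,j,i}\, V_{\mF,j}$ for all $x\in\L$, and then obtain \eqref{eq:fac_v1} by summing the contributions from the seven relevant sites (the paper does this via the variational form and summation by parts, you via direct differentiation). The only real difference is that you verify the Cauchy--Born case $\pp_\ell \Vc(\mF{\bf a}) = \tfrac13 V_{\mF,\ell-1} + \tfrac23 V_{\mF,\ell} + \tfrac13 V_{\mF,\ell+1}$ explicitly, whereas the paper asserts it without detail; your computation here is correct and is indeed the substantive point behind the choice $C_{x,j}=2/3$ in \eqref{eq:C_At_CB}.
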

\begin{proof}
  Using the notation \eqref{eq:defn_Vac}, we have
  \begin{displaymath}
    \< \del\Eac(\mF\id), u\> = \sum_{x\in\L}\sum_{i=1}^6 
    \pp_i \Vac_x(\mF{\bf a}) \cdot \Da{i} u(x),
  \end{displaymath}
  which immediately gives
  \begin{equation}
    \label{eq:flat:facprf:1}
    - f^\ac(x; \mF\id) = \sum_{i = 1}^6 \b[ \pp_i \Vac_{x-a_i}(\mF{\bf
      a}) - \pp_i \Vac_x(\mF{\bf a}) \b].
  \end{equation}

  With the notation introduced in \eqref{eq:C_At_CB}, we obtain
  \begin{displaymath}
    \sum_{i = 1}^6 \pp_i \Vac_x(\mF{\bf a}) \cdot \Da{i} u(x)
    = \sum_{j = 1}^6 V_{\mF, j} \sum_{i = 1}^6 C_{x,j,i} \Da{i} u(x), 
  \end{displaymath}
  which implies
  \begin{equation}
    \label{eq:flat:facprf:5}
    \pp_i \Vac_x(\mF{\bf a}) = \sum_{j = 1}^6 C_{x,j,i} V_{\mF,j}.
  \end{equation}
  Combining \eqref{eq:flat:facprf:5} with \eqref{eq:flat:facprf:1}
  yields \eqref{eq:fac_v1}.
  %
  %
\end{proof}

Testing \eqref{eq:fac_v1} for all $V \in \Vs$ and $\mF \in \R^{2
  \times 2}$, we obtain the next result.

\begin{lemma}
  Suppose that the parameters $(C_{x,j,i})_{i,j = 1}^6, x \in \Is$,
  satisfy one-sidedness \eqref{eq:simple_Cxji} and local energy
  consistency \eqref{eq:equiv_econs}. Then $\Eac$ satisfies local
  force consistency~\eqref{eq:force_cons} for all $V \in \Vs$ if and
  only if
  \begin{equation}
    \label{eq:patch_cons_cond}
    \sum_{i = 1}^6 \b( C_{x-a_i, j,i} - C_{x-a_i,j+3,i}
    - C_{x, j, i} +C_{x,j+3,i} \b) = 0 
    \qquad\forall\, j = 1, 2, 3,
    \quad \forall x \in \L.
  \end{equation}
\end{lemma}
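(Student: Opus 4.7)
The plan is to start from Lemma \ref{th:fac_v1}, which under the stated hypotheses on $(C_{x,j,i})$ gives the identity
\begin{equation*}
  -f^\ac(x;\mF\id) = \sum_{j=1}^6 \beta_j(x)\, V_{\mF,j},
  \qquad \text{where} \quad
  \beta_j(x) := \sum_{i=1}^6 \bigl( C_{x-a_i,j,i} - C_{x,j,i} \bigr).
\end{equation*}
Thus the force consistency condition \eqref{eq:force_cons}, required for all $V\in\Vs$ and all $\mF\in\R^{2\times 2}$, amounts to the vanishing of this $\R^2$-valued linear combination of the six gradients $V_{\mF,j}$.

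Next I would exploit the point symmetry of $V$: identity \eqref{eq:pt_symm_D} gives $V_{\mF,j+3} = -V_{\mF,j}$ for $j=1,2,3$. Splitting the sum into $j\in\{1,2,3\}$ and $j\in\{4,5,6\}$ and applying this relation reduces the expression to
\begin{equation*}
  -f^\ac(x;\mF\id) = \sum_{j=1}^3 \bigl( \beta_j(x) - \beta_{j+3}(x) \bigr)\, V_{\mF,j}.
\end{equation*}
Expanding the definition of $\beta_j(x)$ shows that $\beta_j(x) - \beta_{j+3}(x)$ is precisely the left-hand side of the claimed condition \eqref{eq:patch_cons_cond}. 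The ``if'' direction now follows immediately.

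For the ``only if'' direction, I need to verify that the three vectors $V_{\mI,1}, V_{\mI,2}, V_{\mI,3}\in\R^2$ can be prescribed independently by varying $V\in\Vs$. This is the only step that requires a construction: for any fixed $j_0\in\{1,2,3\}$ and $w\in\R^2$, the potential
\begin{equation*}
  V(\mathbf{g}) := w\cdot g_{j_0} - w\cdot g_{j_0+3}
\end{equation*}
lies in $\Vs$ (a direct check of \eqref{eq:pt_symm}) and has $\pp_{j_0} V = w$ with all other first-order partials at $j\in\{1,2,3\}\setminus\{j_0\}$ equal to zero. Taking arbitrary $\R^2$-linear combinations of such potentials (one for each $j_0\in\{1,2,3\}$) yields a $V\in\Vs$ realising any prescribed values of $V_{\mI,1}, V_{\mI,2}, V_{\mI,3}$. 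Evaluating at $\mF=\mI$ and choosing $w$ appropriately therefore forces $\beta_j(x) - \beta_{j+3}(x) = 0$ for each $j=1,2,3$ and each $x\in\L$, which is \eqref{eq:patch_cons_cond}.

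No step is truly hard; the only subtle point is recognising that the six vectors $V_{\mF,j}$ are not independent as $V$ ranges over $\Vs$ (point symmetry forces three linear dependencies), so that the genuine obstruction lives in three vector equations, not six — this is why the claim involves only $j=1,2,3$.
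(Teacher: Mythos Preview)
Your proof is correct and follows essentially the same route as the paper: both start from Lemma~\ref{th:fac_v1}, use the point symmetry \eqref{eq:pt_symm_D} to reduce the six-term sum to three terms indexed by $j=1,2,3$, and then test against particular potentials in $\Vs$ to extract each scalar condition. The only difference is cosmetic: the paper tests with the quadratic potential $V(\mathbf{g}) = \tfrac12(|g_1-a_1|^2 + |g_4-a_4|^2)$ and varies $\mF$ (so that $V_{\mF,1} = (\mF-\mI)a_1$ ranges over $\R^2$), while you use the linear potential $V(\mathbf{g}) = w\cdot g_{j_0} - w\cdot g_{j_0+3}$ evaluated at $\mF=\mI$ and vary $w$. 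Both are equally valid and equally simple.
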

\begin{proof}
  Using \eqref{eq:fac_v1} and point symmetry \eqref{eq:pt_symm_D} one
  readily checks that \eqref{eq:patch_cons_cond} is sufficient for
  force consistency \eqref{eq:force_cons}.  To show that
  \eqref{eq:patch_cons_cond} is also necessary we test
  \eqref{eq:fac_v1} with
  \begin{displaymath}
    V({\bf g}) = \smfrac12 \b( |g_1-a_1|^2 + |g_4 - a_4|^2 \b),
  \end{displaymath}  
  which clearly belongs to the class $\Vs$, to obtain
  \begin{align*}
    -f^\ac(x; \mF\id) =~& \sum_{j = 1, 4} \sum_{i = 1}^6 (C_{x-a_i,j,i}
    - C_{x,j,i}) (\mF - \mI)a_j \\
    =~& \sum_{i = 1}^6 \b( C_{x-a_i,1,i} - C_{x-a_i,4,i} - C_{x,1,i} +
    C_{x,4,i} \b) \, (\mF - \mI) a_1.
  \end{align*}
  For this expression to vanish for all $\mF \in \R^{2 \times 2}$ we obtain
  precisely \eqref{eq:patch_cons_cond} for $j = 1$. For $j = 2,3$ the
  same argument applies.
\end{proof}

\subsection{Explicit parameters for flat interfaces}
\label{sec:construction:flat}

We now give a characterisation, for a flat a/c interface, of all
parameters satisfying the one-sidedness assumption
\eqref{eq:simple_Cxji}, which give a patch test consistent a/c
method.

\begin{figure}
  \begin{center}
   \includegraphics[height= 0.16\textwidth]{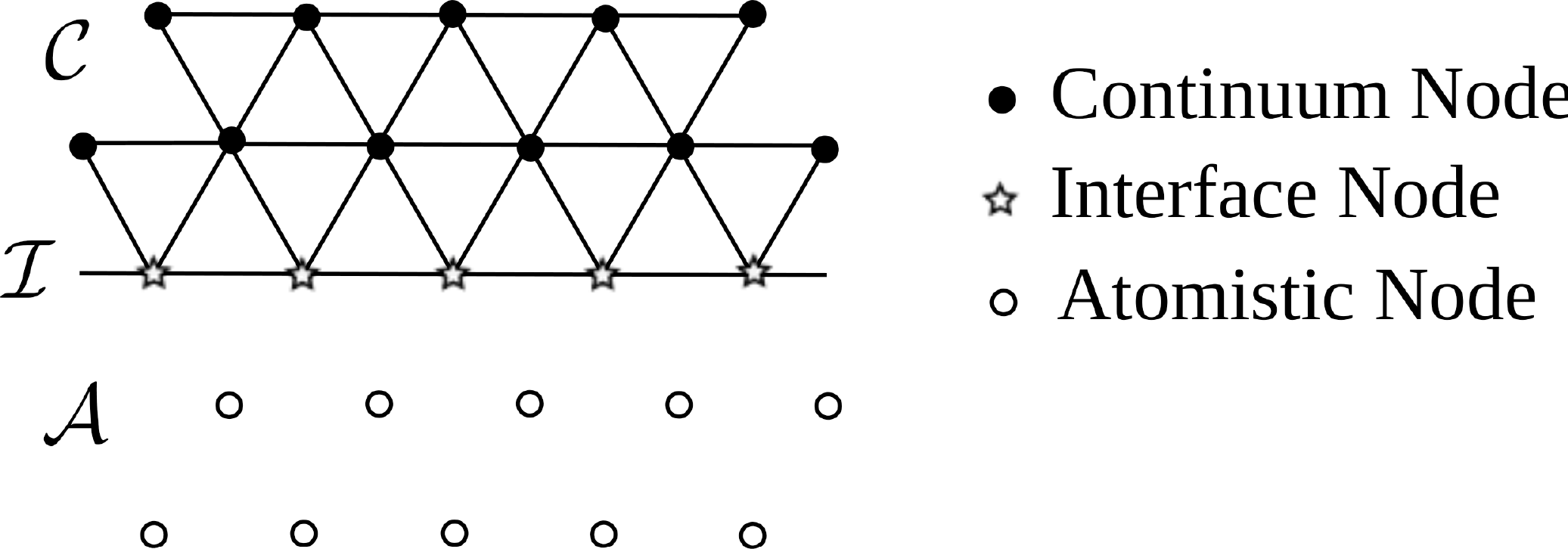}
   \end{center}
   \caption{The flat interface case.}
\label{fig:flatinterface}
\end{figure}

\begin{proposition}
  \label{th:params:flat}
  Suppose that $\As = \{ x \in \L \sep x_2 < 0 \}$, $\Is = \{x \in \L
  \sep x_2 = 0 \}$ and $\Cs = \{ x \in \L \sep x_2 > 0 \}$ (see Figure
  \ref{fig:flatinterface}). Then the parameters $(C_{x,j,i})_{i,j =
    1}^6$, $x \in \Is$, satisfy the one-sidedness condition
  \eqref{eq:simple_Cxji}, energy consistency
  \eqref{eq:simple_C_econs}, and force consistency
  \eqref{eq:patch_cons_cond}, if and only if
  \begin{align}
    \label{eq:interface_C_econs1}
    C_{x,1} =~& C_{x+a_1, 4} \qquad \forall x \in \Is, \quad
    \text{and} \\
    \label{eq:interface_C_econs2}
    C_{x,j} =~& C_{x+a_1, j} \qquad \forall x \in \Is, \quad j \in
    \{2, 3, 5, 6\},
  \end{align}
  where we have used the reduced parameters defined in
  \eqref{eq:simple_C_econs}.
\end{proposition}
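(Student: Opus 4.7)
The plan is to invoke the force consistency condition \eqref{eq:patch_cons_cond} at each lattice site and exploit the translation invariance of the flat interface to localize the non-trivial constraints. Introducing the shorthand $\Delta_i(x,j) := C_{x-a_i,j,i} - C_{x,j,i}$, condition \eqref{eq:patch_cons_cond} reads $\sum_{i=1}^6 [\Delta_i(x,j) - \Delta_i(x,j+3)] = 0$ for $j = 1,2,3$. Because $C_{z,j}$ is constant throughout $\As$ (equal to $1$) and throughout $\Cs$ (equal to $2/3$), the quantity $\Delta_i(x,j)$ vanishes whenever $x$ and $x-a_i$ lie in the same region. Since $\As$, $\Is$, $\Cs$ correspond to lattice rows $\le -1$, $0$, and $\ge 1$ respectively, and since $(a_i)_2 \in \{0, \pm\sqrt3/2\}$, the only potentially nontrivial force-consistency conditions arise for $x$ in rows $-1$, $0$, or $1$.

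Next, I would treat the generic row-$(-1)$ site $A_k := (k-\tfrac12, -\tfrac{\sqrt 3}{2})$, for which the only neighbours $x-a_i$ lying in $\Is$ are $x-a_5 = I_k$ and $x-a_6 = I_{k-1}$, where $I_k := (k,0)$. Writing $c_{k,j} := C_{I_k,j}$ and using that $C_{A_k,j,i}$ equals $1$ when $i=j$ and vanishes for the other relevant $i \in \{j-1,j+1\}$, a direct expansion yields
\begin{displaymath}
  F_1(A_k) = c_{k,4} - c_{k-1,1}, \qquad F_2(A_k) = c_{k-1,5} - c_{k,5}, \qquad F_3(A_k) = c_{k,6} - c_{k-1,6}.
\end{displaymath}
Setting these to zero for all $k$ recovers \eqref{eq:interface_C_econs1} as well as the $j \in \{5,6\}$ cases of \eqref{eq:interface_C_econs2}. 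An analogous computation at the generic row-$1$ site $B_k := (k+\tfrac12, \tfrac{\sqrt3}{2})$, where only $x-a_2 = I_k$ and $x-a_3 = I_{k+1}$ are in $\Is$ (and $C_{B_k,j,i}$ takes the continuum values $2/3, 1/3$), produces $F_j(B_k)$ expressions that yield the $j \in \{2,3\}$ cases of \eqref{eq:interface_C_econs2} together with a redundant copy of \eqref{eq:interface_C_econs1}.

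The hard part will be verifying that the three conditions at row-$0$ sites $x = I_k$ are automatically implied by the row-$\pm1$ conditions, so that no further constraint on the $c_{k,j}$ is needed. At $I_k$ all six neighbours contribute (two in $\As$, two in $\Cs$, two in $\Is$), so each $F_j(I_k)$ is a sum of six nontrivial $\Delta_i$ terms requiring careful bookkeeping. A direct expansion gives, for example,
\begin{displaymath}
  F_1(I_k) = (c_{k-1,1} + c_{k,1}) - (c_{k,4} + c_{k+1,4}),
\end{displaymath}
which vanishes under \eqref{eq:interface_C_econs1}; the parallel expressions for $F_2(I_k)$ and $F_3(I_k)$ turn out to be differences of the form $(c_{k,j} - c_{k\pm 1,j})$ with $j \in \{2,3,5,6\}$, vanishing under \eqref{eq:interface_C_econs2}. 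This confirms both that the enumeration in rows $\pm 1$ is exhaustive and, reading the same calculation in reverse, establishes the sufficiency direction: under \eqref{eq:interface_C_econs1}--\eqref{eq:interface_C_econs2}, every $F_j(x)$ vanishes, so \eqref{eq:patch_cons_cond} holds throughout $\L$.
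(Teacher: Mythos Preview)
Your proposal is correct and follows essentially the same approach as the paper: both arguments reduce the force-consistency test to the three lattice rows $\Is_- \cup \Is \cup \Is_+$ (your rows $-1,0,+1$), extract the necessary conditions from the rows $\pm 1$, and check that the row-$0$ conditions are redundant. Your version is considerably more explicit than the paper's, which merely asserts the outcome of the direct calculations; one small imprecision is that $F_2(I_k)$ and $F_3(I_k)$ are each a \emph{sum of two} differences of the stated form (e.g.\ $F_2(I_k)=(c_{k,2}-c_{k-1,2})+(c_{k+1,5}-c_{k,5})$), not a single one, but this does not affect the conclusion.
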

\begin{proof}
  One-sidedness \eqref{eq:simple_Cxji} and energy consistency
  \eqref{eq:simple_C_econs} yields the reduced parameters
  $(C_{x,j})_{j = 1}^6$, $x \in \Is$, satisfying
  \eqref{eq:simple_C_econs}. Recall also the extension
  \eqref{eq:C_At_CB} of these parameters for $x \in \As \cup \Cs$. 

  Let $\Is_+ := \{ x+a_2 \sep x \in \Is \}$ and $\Is_- := \{ x - a_2
  \sep x \in \Is \}$. Clearly, we need to
  test~\eqref{eq:patch_cons_cond} only for $x \in \Is \cup \Is_- \cup
  \Is_+$. Exploiting the symmetries of the problem it is also clear
  that we only need to consider $j = 1, 2$.

  It is straightforward to verify through direct calculations that any
  set of coefficients satisfying \eqref{eq:interface_C_econs1},
  \eqref{eq:interface_C_econs2} satisfies the equivalent force
  consistency condition \eqref{eq:patch_cons_cond}. 
  
  Let $j=1$ and $x\in\Is$ then we obtain that
  \eqref{eq:interface_C_econs1} is necessary from the force
  consistency condition \eqref{eq:patch_cons_cond}, applied at $x+a_2$
  or $x+a_6$. Let $j=2$, then we obtain $C_{x,2}=C_{x+a_1,2}$ from the
  force consistency condition \eqref{eq:patch_cons_cond} applied at
  $x+a_2$.  Therefore, \eqref{eq:interface_C_econs1} and
  \eqref{eq:interface_C_econs2} are also necessary.
\end{proof}

\begin{remark}
  We observe that the coefficients $(C_{x,i,j})_{i,j = 1}^6, x \in
  \Is$, are not unique, but that we have considerable freedom in the
  construction of the GR-AC method: For each direction $a_i$ that is
  not aligned with the interface, there is a free parameter, while for
  each edge $(x, x+a_1)$ lying on the interface, there is one
  additional free parameter. This freedom will be reduced in the case
  of corners.
\end{remark}

\subsection{Explicit parameters for general interfaces}
\label{sec:construction:corners}
For general interface geometries we make the following separation
assumption. This assumption requires that, if the atomistic region can
be decomposed into several connected components, then they must be
separated by at least four ``lattice hops''.

\begin{assumption}
  \label{th:geninterface}
  Each vertex $x\in\Is$ has exactly two neighbours in $\Is$, and at
  least one neighbour in $\Cs$.
\end{assumption}


\medskip
As in the flat interface case, we can completely characterise all
parameters within the one-sidedness assumption, which satisfy the
patch test.

\begin{proposition}
  \label{th:params:corner}
  Let $\As \subset \L$ be defined in such a way that the interface set
  $\Is$ satisfies Assumption \ref{th:geninterface}, and is {\em not}
  planar.  Then the parameters $(C_{x,j,i})_{i,j = 1}^6$, $x \in \Is$,
  satisfy the one-sidedness condition \eqref{eq:simple_Cxji}, energy
  consistency \eqref{eq:simple_C_econs}, and force consistency
  \eqref{eq:patch_cons_cond}, if and only if
  \begin{align}
    \label{eq:interface_C_ccons1}
    C_{x,j} =~& C_{x+a_j, j+3} \hspace{-2cm}  &&\forall x \in \Is, \quad
    x+a_j\in\Is, \\
    \label{eq:interface_C_ccons2}
    C_{x,j} =~& 1  \hspace{-2cm} &&\forall x \in \Is, \quad x+a_j\in\As, \quad \text{and}\\
    \label{eq:interface_C_ccons3}
    C_{x,j} =~& 2/3 \hspace{-2cm} &&\forall x \in \Is, \quad x+a_j\in\Cs,
  \end{align}
  where $(C_{x,j})_{j = 1}^6$, $x \in \Is$, are the reduced parameters
  defined in \eqref{eq:simple_C_econs}.
\end{proposition}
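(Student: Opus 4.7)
The proof mirrors Proposition \ref{th:params:flat} but uses non-planarity to eliminate the extra freedom. In both directions I would work with the reduced parameters $C_{x,j}$ under \eqref{eq:simple_C_econs}, extended from $\Is$ to all of $\L$ via \eqref{eq:C_At_CB}, and introduce the \emph{symmetry defect}
\begin{equation*}
S_j(x) := C_{x,j} - C_{x+a_j,\,j+3}, \qquad x \in \L,\; j \in \{1,\dots,6\}.
\end{equation*}
Because $\As$ and $\Cs$ are non-adjacent by the definition of $\Is$, a short case check shows that conditions \eqref{eq:interface_C_ccons1}--\eqref{eq:interface_C_ccons3} are jointly equivalent to $S_j \equiv 0$ on $\L$. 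Substituting the reduced parameterisation into \eqref{eq:patch_cons_cond} (a calculation analogous to that in the proof of Proposition \ref{th:params:flat}) simplifies it to the four-point discrete equation
\begin{equation} \label{eq:plan:Sj_eqn}
S_j(x) + S_j(x-a_j) = S_j(x-a_{j-1}) + S_j(x-a_{j+1}), \qquad x \in \L,\; j \in \{1,2,3\}.
\end{equation}
Since $a_j = a_{j-1} + a_{j+1}$ and $(a_{j-1}, a_{j+1})$ is a $\Z$-basis of $\L$ for each such $j$, introducing coordinates $x = m a_{j-1} + n a_{j+1}$ recasts \eqref{eq:plan:Sj_eqn} as the separability statement $S_j(m,n) = \Phi_j(m) + \psi_j(n)$ for some $\Phi_j, \psi_j : \Z \to \R$. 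Sufficiency is then immediate: $S_j \equiv 0$ trivially solves \eqref{eq:plan:Sj_eqn}.

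For necessity, the bulk extensions \eqref{eq:C_At_CB} ensure $S_j(x) = 0$ whenever both $x$ and $x+a_j$ lie in the interior of $\As$ or both in the interior of $\Cs$. The key geometric claim is that, because $\Is$ is \emph{not} planar, for each $j \in \{1,2,3\}$ the bulk of $\As \cup \Cs$ contains two transverse lattice lines---one in direction $a_{j-1}$ and one in direction $a_{j+1}$---along which $S_j$ vanishes identically. Combined with the separable form, this forces both $\Phi_j$ and $\psi_j$ to be constants summing to zero, hence $S_j \equiv 0$ on $\L$, which in turn yields \eqref{eq:interface_C_ccons1}--\eqref{eq:interface_C_ccons3} directly.

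The main obstacle is establishing the two-transverse-lines claim for every interface configuration permitted by Assumption \ref{th:geninterface} under non-planarity. If $\Is$ is a finite closed loop, both lines can be taken far from $\Is$ in whichever of $\As, \Cs$ is unbounded, and the claim is trivial. The delicate case is a bi-infinite path of $\Is$ with at least one bend: here a local geometric argument near any vertex $x_0 \in \Is$ whose two interface neighbours lie in non-opposite directions shows that the asymptotic reach of $\As \cup \Cs$ strictly exceeds that of the flat case, producing the required pair of lines in both $a_{j-1}$ and $a_{j+1}$ directions for every $j \in \{1,2,3\}$. It is precisely the failure of this property when $\Is$ is straight---$\Cs$ being a half-plane, admitting full lines in only one of the directions $a_{j-1}, a_{j+1}$---that accounts for the extra freedom catalogued in Proposition \ref{th:params:flat}.
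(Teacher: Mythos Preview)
Your approach is genuinely different from the paper's, and the algebraic core is correct and illuminating: reducing \eqref{eq:patch_cons_cond} under \eqref{eq:simple_C_econs} and \eqref{eq:C_At_CB} to the four-point relation \eqref{eq:plan:Sj_eqn} for the symmetry defect $S_j$, and hence to the separable form $S_j(m,n)=\Phi_j(m)+\psi_j(n)$, is a clean computation, and the equivalence of $S_j\equiv 0$ with \eqref{eq:interface_C_ccons1}--\eqref{eq:interface_C_ccons3} is exactly right. Sufficiency then drops out immediately. By contrast, the paper argues locally: it tests \eqref{eq:patch_cons_cond} site-by-site on $\Is\cup\Is_+\cup\Is_-$, reading off necessity at corner sites in $\Is_\pm$, and for sufficiency at a corner in $\Is$ it invokes the fact that the forces are conservative (all other sites are in equilibrium, hence so is the last one). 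Your route explains structurally \emph{why} non-planarity removes the flat-interface freedom, whereas the paper's is a concrete case check.

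The gap is precisely where you place it, but it is more serious than your sketch suggests. The ``two transverse lattice lines'' claim can genuinely fail. Take a bi-infinite interface that is flat in direction $a_1$ except for a single $a_2$ step (so $\Is$ consists of $\{(m,0):m\le 0\}\cup\{(m,1):m\ge 1\}$ in $(a_1,a_3)$-coordinates). This satisfies Assumption~\ref{th:geninterface} and is non-planar, yet for $j=2$ the a~priori zero set of $S_2$---the set of $x$ with neither $x$ nor $x+a_2$ in $\Is$---misses the entire row $n=0$: for every $m$, either $(m,0)\in\Is$ (when $m\le 0$) or $(m+1,1)\in\Is$ (when $m\ge 0$). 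No full line in the $a_3$ direction lies in the zero set either. Separability then leaves $\psi_2(0)$ undetermined, i.e.\ $S_2(m,0)\equiv\kappa$ is compatible with \eqref{eq:plan:Sj_eqn} and with the bulk vanishing for any $\kappa$. Since $S_1,S_2,S_3$ decouple (they involve disjoint parameters $C_{x,j}$), the other values of $j$ do not rescue you. So your argument as stated does not close; you would need either a sharper use of the zero set than ``full transverse lines'' (for instance, for each fixed $n$ exhibit \emph{some} $m$ with $(m,n)$ in the zero set, which is what actually forces $\psi_j(n)=-\Phi_j$), or additional geometric hypotheses on $\Is$. The paper's own necessity argument is also terse at this point, so this is worth working out carefully.
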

\begin{proof}
  As in the flat interface case, one-sidedness \eqref{eq:simple_Cxji}
  and energy consistency \eqref{eq:simple_C_econs} are equivalent to
  having the reduced parameters $(C_{x,j})_{j = 1}^6, x \in \Is$,
  satisfying \eqref{eq:simple_C_econs}. Recall also the extension
  \eqref{eq:C_At_CB} of these parameters for $x \in \As \cup \Cs$.

  Let $\Is_+ := \{ x\in\Cs \sep \exists a_j, x+a_j \in \Is \}$ and
  $\Is_- := \{ x\in\As \sep \exists a_j, x+a_j \in \Is \}$. We need to
  test \eqref{eq:patch_cons_cond} only for $x \in \Is \cup \Is_- \cup
  \Is_+$.  The necessity of \eqref{eq:interface_C_ccons1} follows as
  in the flat interface case. The necessity of
  \eqref{eq:interface_C_ccons2} and \eqref{eq:interface_C_ccons3} can
  be obtained by testing the corner sites in $\Is_\pm$ in the
  interface geometry depicted in Figure \ref{fig:acmethod}.

  To see that
  \eqref{eq:interface_C_ccons1}--\eqref{eq:interface_C_ccons3} are
  also sufficient one notes, first, that the corresponding
  coefficients always provide zero contribution on each edge for the
  sum in \eqref{eq:patch_cons_cond}. Computing the force at $x \in
  \Is_+$ we see that the contribution from $\Vi$ is the same as from
  $\Vc$, and must therefore cancel, since the pure Cauchy--Born model
  passes \eqref{eq:patch_cons_cond}. For $x \in \Is_-$ the same
  argument applies.

  It remains to test \eqref{eq:patch_cons_cond} for $x \in \Is$, at
  corners. Since \eqref{eq:patch_cons_cond} is a local condition, and
  due to Assumption~\ref{th:geninterface}, one may assume that the
  interface has only one corner. Since all other sites are in
  equilibrium, and since the forces are conservative, it follows that
  the corner must also be in equilibrium.

  (Alternatively, one may check \eqref{eq:patch_cons_cond} through
  explicit computations for the corner geometry shown in Figure
  \ref{fig:acmethod}. All other geometries can be reduced to this one
  by symmetry.)
  %
\end{proof}

\begin{remark}
  We observe that, for a general interface, we only have freedom to
  choose the geometric reconstruction parameters along the interface,
  namely, for each interface edge there is one free parameter.
\end{remark}


\section{Consistency of the Cauchy--Born Approximation}
Before we embark on the analysis of the GR-AC method
\eqref{eq:Eac_general}, we establish a sharp consistency estimate for
Cauchy--Born approximation. Related results were established in
\cite{E:2007a}, which require more stringent conditions on the
smoothness of the deformation field. For the analysis of a/c methods a
sharp consistency estimate, such as Theorem \ref{th:cons2}, is
useful. In the remainder of the section we establish technical results
that are useful for the subsequent consistency analysis of the GR-AC
method.

\subsection{Second-order consistency}
\label{sec:cb_o2}
A natural way to represent the first variation of $\Ea$ is
\begin{equation}
  \label{eq:delEa:edges}
  \b\< \del\Ea(y), u \b\> 
  = \sum_{x \in \L} \sum_{j = 1}^6 \pp_j V(\Da{}y(x)) \cdot \Da{j} u(x) \\
  = \sum_{x \in \L} \sum_{j = 1}^6 V_{x,j} \cdot \Da{j} u(x),
\end{equation}
where we use the notation $V_{x,j} := \pp_j V(\Da{}y(x))$. This
representation can be interpreted as a sum over mesh edges. By
contrast, the most natural representation of $\del\Ec$ is
\begin{equation}
  \label{eq:delEc:volume}
  \b\< \del\Ec(y), u \b\> = \sum_{T \in \T} |T| \pp W(\Dc{T} y) : \Dc{T} u.
\end{equation}
To estimate $\del\Ea - \del\Ec$ we will rewrite
\eqref{eq:delEc:volume} in a form mimicking
\eqref{eq:delEa:edges}. The opposite approach is also possible, but
does not lead as easily to second-order consistency estimates.

\begin{lemma}
  \label{th:delEcdelEa:edge_1}
  For $y \in \Ys_0, T \in \T$, let $V_{T, j} := \pp_j V(\Dc{T} y\cdot  {\bf
    a})$; then
  \begin{align}
    \label{eq:delEc:edge_2}
    \b\< \del\Ec(y), u \b\> =~& \sum_{x \in \L} \sum_{j = 1}^3
    \b(V_{T_{x,j}, j} + V_{T_{x,j-1},j} \b) \cdot \Da{j} u(x), \quad
    \forall u \in \Us_0,  \quad
    \text{and} \\[-2mm]
    \label{eq:delEa:edge_2}
    \b\< \del\Ea(y), u \b\> =~& \sum_{x \in \L} \sum_{j = 1}^3
    \b(V_{x, j} - V_{x+a_j,j+3}\b) \cdot \Da{j} u(x), \quad
    \forall u \in \Us_0.
  \end{align}
\end{lemma}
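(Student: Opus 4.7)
The plan is to prove both identities by direct symbol manipulation. They rely only on the lattice identities $a_{j+3}=-a_j$ and $a_{j-1}+a_{j+1}=a_j$, together with (for the Cauchy--Born identity) the point-symmetry consequence \eqref{eq:pt_symm_D}. For \eqref{eq:delEa:edge_2} I would begin from \eqref{eq:delEa:edges} and split the inner sum according to $j\in\{1,2,3\}$ and $j\in\{4,5,6\}$. For the second piece, substituting $j=j'+3$ with $j'\in\{1,2,3\}$, the identity $\Da{j'+3}u(x)=u(x-a_{j'})-u(x)$ and the change of variable $x\mapsto x+a_{j'}$ (a bijection on $\L$, causing no boundary terms since $u$ has compact support) give
\begin{displaymath}
\sum_{x\in\L}V_{x,j'+3}\cdot\Da{j'+3}u(x) \;=\; -\sum_{x\in\L}V_{x+a_{j'},j'+3}\cdot\Da{j'}u(x),
\end{displaymath}
and adding this to the $j\in\{1,2,3\}$ part yields \eqref{eq:delEa:edge_2}.

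For \eqref{eq:delEc:edge_2} I would first differentiate \eqref{eq:defn_Ec_W} and apply the chain rule using $W(\mF)=V(\mF\mathbf{a})/\Omega_0$ to obtain
\begin{displaymath}
\b\<\del\Ec(y),u\b\> \;=\; \tfrac12\sum_{T\in\T}\sum_{k=1}^6 V_{T,k}\cdot(\Dc{T}u)a_k,
\end{displaymath}
the prefactor $\tfrac12$ arising from $|T|/\Omega_0$. Pairing each $k\in\{4,5,6\}$ with $k-3$, and using that $V_{T,k}$ is evaluated at a point of the form $\mF\mathbf{a}$ (so \eqref{eq:pt_symm_D} gives $V_{T,k+3}=-V_{T,k}$) together with $(\Dc{T}u)a_{k+3}=-(\Dc{T}u)a_k$, the two members of each pair contribute equally; this reduces the sum to $\sum_T\sum_{k=1}^3 V_{T,k}\cdot(\Dc{T}u)a_k$.

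It remains to regroup this triangle sum as a sum over edges. For each triangle $T$ and each $k\in\{1,2,3\}$ there is exactly one edge of $T$ in unsigned direction $a_k$: the three edge vectors of $T=T_{x_0,j_0}$ are $a_{j_0}$, $a_{j_0+1}$, and $a_{j_0+1}-a_{j_0}=a_{j_0+2}$, which cover the three unsigned lattice directions once each. Writing such an edge as $(z,z+a_k)$, $z\in\L$, yields $(\Dc{T}u)a_k=\Da{k}u(z)$. Swapping the order of summation, and observing that each mesh edge $(x,x+a_j)$ with $j\in\{1,2,3\}$ is shared by precisely the two triangles $T_{x,j}$ and $T_{x,j-1}$, then gives \eqref{eq:delEc:edge_2}. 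The main point requiring care is this final step: for each triangle $T_{x_0,j_0}$ one must correctly identify the base vertex and direction label in $\{1,2,3\}$ of each of its three edges (with the bookkeeping for the ``slanted'' edge $(x_0+a_{j_0},x_0+a_{j_0+1})$ depending on whether $j_0+2\bmod 6\in\{1,2,3\}$ or $\{4,5,6\}$), and then verify that each mesh edge is shared by exactly two triangles of $\T$, so that the coefficient $V_{T_{x,j},j}+V_{T_{x,j-1},j}$ is obtained on the nose.
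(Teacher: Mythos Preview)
Your proof is correct and follows essentially the same route as the paper. The only cosmetic difference is the order of two steps in the Cauchy--Born identity: the paper first rewrites $\tfrac12\sum_T\sum_{j=1}^6 V_{T,j}\cdot\Da{j}u(x_{T,j})$ as the edge sum $\sum_{x}\sum_{j=1}^6\tfrac12(V_{T_{x,j},j}+V_{T_{x,j-1},j})\cdot\Da{j}u(x)$ and \emph{then} collapses $j=1,\dots,6$ to $j=1,2,3$ via the point symmetry and $\Da{j+3}u(x+a_j)=-\Da{j}u(x)$, whereas you collapse first and regroup second; using the paper's symbol $x_{T,k}$ (the vertex of $T$ with $x_{T,k}+a_k\in T$) makes the final bookkeeping you flag about the ``slanted'' edge unnecessary.
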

\begin{proof}
  It is easy to see that
  \begin{equation}
    \label{eq:DW_sumj}
    \pp W(\mF) = \frac{1}{\Omega_0} \sum_{j = 1}^6 \pp_j V(\mF {\bf a}) \otimes a_j,
  \end{equation}
  and hence, using $\Omega_0 = 2 |T|$ and $\Dc{T} u \cdot a_j= \Da{j} u(x_{T,j})$,
  \begin{align*}
    \b\< \del\Ec(y), u \b\> = \frac{1}{\Omega_0}\sum_{T \in \T} |T| \sum_{j = 1}^6 \b[V_{T, j}
    \otimes a_j \b] : \Dc{T} u 
    = \frac{1}{2} \sum_{T \in \T} \sum_{j = 1}^6 V_{T, j} \cdot \Da{j} u(x_{T, j}).
  \end{align*}
  Every edge appears twice in this sum since it is shared between two
  elements; hence we obtain the edge representation
  \begin{equation}
    \label{eq:delEc:edge_1}
    \b\< \del\Ec(y), u \b\> = \sum_{x \in \L} \sum_{j = 1}^6
    \smfrac12 \b(V_{T_{x,j}, j} + V_{T_{x,j-1},j} \b) \cdot \Da{j} u(x) \qquad
      \forall u \in \Us_0.
  \end{equation}

  Since $\Da{j+3} u(x+a_j) = - \Da{j} u(x)$, and using $V_{T,j+3} =
  -V_{T,j}$ (see \eqref{eq:pt_symm_D}) we can reduce this sum as
  follows:
  \begin{align*}
    \b\< \del\Ec(y), u \b\> =~& \sum_{x \in \L} \sum_{j = 1}^3
    \smfrac12 \b(V_{T_{x,j}, j} + V_{T_{x,j-1},j} -
    V_{T_{x,j},j+3} - V_{T_{x,j-1},j+3} \b) \cdot \Da{j} u(x) \\
    =~& \sum_{x \in \L} \sum_{j = 1}^3
    \b(V_{T_{x,j}, j} + V_{T_{x,j-1},j} \b) \cdot \Da{j} u(x).
  \end{align*}
  This concludes the proof of (\ref{eq:delEc:edge_2}). 

  For the proof of (\ref{eq:delEa:edge_2}) one only needs to use the
  identity $\Da{j+3} u(x+a_j) = - \Da{j} u(x)$.
\end{proof}

\begin{theorem}
  \label{th:cons2}
  Let $y \in \Ys_0$, then
  \begin{equation}
    \label{eq:cons2}
    \b\| \del\Ea(y) - \del\Ec(y) \b\|_{\Us^{-1,p}} \leq
    c \b(M_2 \| D^3 y
    \|_{\ell^p} + M_3 \| D^2 y
    \|_{\ell^{2p}}^2\b)
  \end{equation}
  where $M_2, M_3$ are defined in \S\ref{sec:prop_V}.
\end{theorem}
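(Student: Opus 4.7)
The plan is to combine the edge representations from Lemma~\ref{th:delEcdelEa:edge_1} with point symmetry and a second-order Taylor expansion, reducing the proof to a sharp pointwise estimate on an edge residual. Subtracting \eqref{eq:delEc:edge_2} from \eqref{eq:delEa:edge_2} yields
\begin{displaymath}
\bigl\<\del\Ea(y) - \del\Ec(y), u\bigr\> = \sum_{x \in \L} \sum_{j=1}^3 R_{x,j} \cdot \Da{j} u(x),
\end{displaymath}
with the edge residual
\begin{displaymath}
R_{x,j} := V_{x,j} - V_{x+a_j, j+3} - V_{T_{x,j}, j} - V_{T_{x,j-1}, j}.
\end{displaymath}
H\"older's inequality with conjugate exponents $(p,p')$ for a linear-in-$D^3 y$ part and $(2p,p')$ for a quadratic-in-$D^2 y$ part, together with the standard equivalence $\|\Da{}u\|_{\ell^{p'}} \sim \|\D u\|_{\LL^{p'}}$ on $\PI$-interpolants, reduces \eqref{eq:cons2} to the pointwise bound
\begin{displaymath}
|R_{x,j}| \leq c\, M_2 \max_{z \in N(x)} |D^3 y(z)| + c\, M_3 \Bigl(\max_{z \in N(x)} |D^2 y(z)|\Bigr)^2,
\end{displaymath}
where $N(x) \subset \L$ is a fixed $O(1)$-neighbourhood of $x$.

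To estimate $R_{x,j}$ I introduce four configurations in $\R^{2 \times 6}$ associated with the edge $(x, x+a_j)$:
\begin{displaymath}
\mathbf{g}^{(1)} := \Da{} y(x), \quad \mathbf{g}^{(2)}_i := -\Da{i+3} y(x+a_j), \quad \mathbf{g}^{(3)} := (\Dc{T_{x,j}} y)\mathbf{a}, \quad \mathbf{g}^{(4)} := (\Dc{T_{x,j-1}} y)\mathbf{a}.
\end{displaymath}
The point-symmetry identity \eqref{eq:pt_symm_D} gives $V_{x+a_j,j+3} = -\pp_j V(\mathbf{g}^{(2)})$, so
\begin{displaymath}
R_{x,j} = \bigl[\pp_j V(\mathbf{g}^{(1)}) - \pp_j V(\mathbf{g}^{(3)})\bigr] + \bigl[\pp_j V(\mathbf{g}^{(2)}) - \pp_j V(\mathbf{g}^{(4)})\bigr].
\end{displaymath}
Using the lattice identities $a_{j-1}+a_{j+1}=a_j$ and $a_{j+3}=-a_j$ to expand the barycentric coefficients in $(\Dc{T}y)\cdot a_i$ and the shifted increments $-\Da{i+3}y(x+a_j)$, entrywise inspection shows that $\mathbf{g}^{(1)} - \mathbf{g}^{(3)}$, $\mathbf{g}^{(2)} - \mathbf{g}^{(4)}$, and $\mathbf{g}^{(3)} - \mathbf{g}^{(4)}$ each consist of second-order finite differences of $y$, of size $O(|D^2 y|_{N(x)})$. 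A first-order Taylor expansion of each bracket about $\mathbf{g}^{(3)}$, using $|\pp_{ij} V(\mathbf{g}^{(4)}) - \pp_{ij} V(\mathbf{g}^{(3)})| \leq c M_3 |\mathbf{g}^{(4)} - \mathbf{g}^{(3)}|$ to replace $\pp_{ij}V(\mathbf{g}^{(4)})$ by $\pp_{ij}V(\mathbf{g}^{(3)})$ in the second sum, yields
\begin{displaymath}
R_{x,j} = \sum_{i=1}^6 \pp_{ij} V(\mathbf{g}^{(3)}) \cdot \Delta_i + \mathcal{E}, \qquad |\mathcal{E}| \leq c M_3 \Bigl(\max_{N(x)}|D^2 y|\Bigr)^2,
\end{displaymath}
where $\Delta_i := (g^{(1)}_i - g^{(3)}_i) + (g^{(2)}_i - g^{(4)}_i)$.

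The crux, and main obstacle, is the cancellation $|\Delta_i| \leq c \max_{N(x)} |D^3 y|$ for each $i \in \{1,\dots,6\}$. The pairing $(\mathbf{g}^{(1)},\mathbf{g}^{(3)})$ and $(\mathbf{g}^{(2)},\mathbf{g}^{(4)})$ has been chosen so that within each pair the entries $i=j$ and $i=j+1$ coincide, giving $\Delta_j = \Delta_{j+1} = 0$; a short computation using $a_{j-1} = a_j - a_{j+1}$ and $g^{(2)}_{j-1} = \Da{j}y(x) - \Da{j+1}y(x)$ also yields $\Delta_{j-1} = 0$. For the three remaining indices $i \in \{j-2, j+2, j+3\}$, each single mismatch $g^{(1)}_i - g^{(3)}_i$ or $g^{(2)}_i - g^{(4)}_i$ is only a second-order finite difference, but the two mismatches are linked by translation along the edge $(x, x+a_j)$, so their sum collapses to a discrete $\Da{j}$ of a second-order difference, i.e.\ a third-order finite difference controlled by $|D^3 y|_{N(x)}$. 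As a representative case, $\Delta_{j+3} = -y(x+2a_j) + 3y(x+a_j) - 3y(x) + y(x-a_j)$ is the standard third centred difference in direction $a_j$. Combined with $\sum_i \|\pp_{ij}V(\mathbf{g}^{(3)})\|_{\rm op} \leq M_2$ from the definition of $M_2$ in \S\ref{sec:prop_V}, this delivers the pointwise estimate on $R_{x,j}$, and the theorem follows via H\"older. The key observation is that a naive Taylor expansion of either bracket alone would leave second-order errors and only give first-order consistency; second-order consistency is recovered precisely by pairing the two brackets so that their $O(|D^2 y|)$ contributions cancel.
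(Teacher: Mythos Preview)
Your proof is correct and follows essentially the same route as the paper: both start from the edge residual $R_{x,j} = V_{x,j} - V_{x+a_j,j+3} - V_{T_{x,j},j} - V_{T_{x,j-1},j}$ obtained from Lemma~\ref{th:delEcdelEa:edge_1}, Taylor expand the two atomistic terms about the adjacent element gradients, and then observe the key cancellation that the combined first-order remainders $\Delta_i$ (the paper's $\eps_i$) are third-order finite differences. Your packaging via the four configurations $\mathbf{g}^{(1)},\dots,\mathbf{g}^{(4)}$ and expansion about a single base point $\mathbf{g}^{(3)}$ is a slightly cleaner bookkeeping device than the paper's separate expansions about $\mF_+$ and $\mF_-$, but the content, including which indices give $\Delta_i = 0$ and which give genuine third differences, is identical.
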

\begin{proof}
  It is useful to visualize this proof using Figure
  \ref{fig:cbconsistency}, and Figure \ref{fig:Txj_xTj} for additional
  detail. From Lemma \ref{th:delEcdelEa:edge_1} we obtain
  \begin{align}
    \b\< \del\Ea(y) - \del\Ec(y), u \b\> =~& \sum_{x \in \L} \sum_{j =
      1}^3 \delta_j(x) \cdot \Da{j} u(x), \\
    \label{eq:cons2_defn_epsj}
    \text{where} \qquad 
   \delta_j(x) :=~& V_{x, j} - V_{x+a_j,j+3} - V_{T_{x,j}, j} -
    V_{T_{x,j-1},j}.
  \end{align}
  In the following we estimate $\delta_1(x)$ only; the remaining
  estimates follow by symmetry.

  \begin{figure}
    \begin{center}
      \includegraphics[height= 0.15\textwidth]{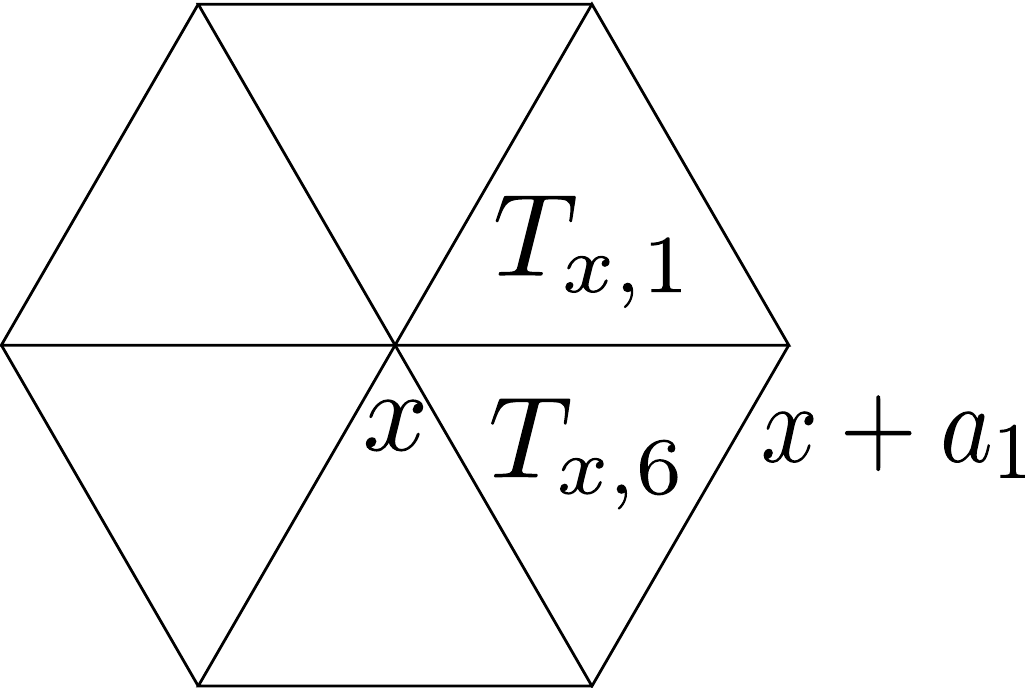}
    \end{center}
    \caption{Visualisation of the proof of Theorem \ref{th:cons2}.}
    \label{fig:cbconsistency}
  \end{figure}

  Let $\mF_+ := \Dc{T_{x,1}} y$ and $\mF_- := \Dc{T_{x,6}}y$, then
  $V_{T_{x,1},1} = V_{\mF_+, 1}$ and $V_{T_{x,6},1} =
  V_{\mF_-,1}$. Moreover we can Taylor expand
  \begin{align*}
    V_{x,1} =~& V_{\mF_+,1} + \sum_{i = 1}^6 V_{\mF_+,1i} (\Da{i}y(x) -
    \mF_+ a_i) + O\b(|D^2y(x)|^2\b), \quad \text{and similarly} \\
    - V_{x+a_1,4} =~& - V_{\mF_-,4} - \sum_{j = 1}^6 V_{\mF_-, 4i}
    (\Da{i} y(x+a_1) - \mF_- a_i)  + O\b(|D^2y(x)|^2\b)\\
    =~& V_{\mF_-, 1} - \sum_{j = 1}^6 V_{\mF_-,1(i+3)} (\Da{i}
    y(x+a_1) - \mF_- a_i)  + O\b(|D^2y(x)|^2\b)\\
    =~& V_{\mF_-, 1} + \sum_{j = 1}^6 V_{\mF_-,1i} (- \Da{i+3}
    y(x+a_1) - \mF_- a_i) + O\b(|D^2y(x)|^2\b).
  \end{align*}
  A careful analysis of the remainder shows that $O(|D^2y(x)|^2) \leq
  \frac{1}{2}\sum^6_{i,j=1} |\pp_{1ij}V({\bm \theta})|\,|D^2y(x)|^2|$
  for some ${\bm \theta} \in \R^{2\times 6}$.  In the remainder of the
  proof we will suppress the argument ${\bm \theta}$.


  Clearly, $V_{\mF_-,1i} - V_{\mF_+,1i} = O(|D^2y(x)|) \leq
  \sum_{j=1}^6 |\pp_{1ij} V|\,|D^2y(x)|$,
  and hence we can deduce that
  \begin{align*}
    \delta_1(x) =~& \sum_{i = 1}^6 V_{\mF_+,1i} \b( \Da{i}y(x) - \mF_+
    a_i - \Da{i+3} y(x+a_1) - \mF_- a_i \b) + O\b(|D^2y(x)|^2\b) \\
    =~& \sum_{i = 1}^6 V_{\mF_+,1i} \b( \Da{i}y(x) - \Da{i} y(x_i^+) +
    \Da{i} y(x+a_1-a_i) - \Da{i} y(x_i^-) \b) + O\b(|D^2y(x)|^2\b) \\
    =:~& \sum_{i = 1}^6 V_{\mF_+,1i} \, \eps_i + O\b(|D^2y(x)|^2\b),
  \end{align*}
  where $x_i^+ := x_{T_{x,1}, i}$ and $x_i^- := x_{T_{x,6},
    i}$. (These are simply the vertices in the two adjacent elements
  such that the identities $\mF_\pm a_i = \Da{i} y(x_i^\pm)$ hold.)

  Tracing the previous Taylor expansions, we see that, in the last
  estimate, $O(|D^2y(x)|^2) \leq
  2\sum^6_{i,j=1}|\pp_{1ij}V|\,|D^2y(x)|^2$.

  We compute $\eps_3$ in detail but only give the results for the
  remaining coefficients:
  \begin{align*}
    \eps_3 =~& \Da{3}y(x) - \Da{3} y(x+a_1) +
    \Da{3} y(x+a_1-a_3) - \Da{3} y(x-a_3)  \\
    =~& - \Da{1} \Da{3} y(x) + \Da{1}\Da{3} y(x-a_3) = \Da{6}\Da{1}\Da{3} y(x).
  \end{align*}
  By performing similar calculations for $i = 1, 2, 4, 5, 6$, one
  finds
  \begin{align*}
    \eps_1 = \eps_2 = \eps_6 = 0, \quad 
    \eps_4 = \Da{1}\Da{1}\Da{4}y(x), \quad \text{and} \quad
    \eps_5 = \Da{1}\Da{2}\Da{5} y(x);
  \end{align*}
  hence we obtain that $\delta_j(x) = O(|D^2 y(x)|^2 + |D^3 y(x)|)$
  (recall that we assumed, without loss of generality, that $j = 1$),
  where $O(|D^3 y(x)|) \leq \sum_{i=3,4,5}|\pp_{1,i} V||D^3y(x)|$.

  Combining these estimates, we obtain
  \begin{align*}
    \b\< \del\Ea(y) - \del\Ec(y), u \b\> 
    \leq~& \B(\sum_{x \in \L} \sum_{j = 1}^3 |\delta_j(x)|^p
    \B)^{1/p}\,
    \B( \sum_{x \in \L} \sum_{j = 1}^3 |\Da{j} u(x)|^{p'} \B)^{1/p'}.
  \end{align*}
  Elementary estimates yield
  \begin{align*}
     \B(\sum_{x \in \L} \sum_{j = 1}^3 |\delta_j(x)|^p
    \B)^{1/p} \leq~& M_2 \|D^3 y \|_{\ell^p} + M_3 \|D^2 y
    \|_{\ell^{2p}}^2, \quad \text{and} \\
    \B( \sum_{x \in \L} \sum_{j = 1}^3 |\Da{j} u(x)|^{p'} \B)^{1/p'}
    \leq~& \b(2\sqrt{3}\b)^{1/p'} \B( \sum_{T \in \T} |T| \b|\Dc{T}u\b|^{p'}\B)^{1/p'},
  \end{align*}
  from which the result follows immediately.
\end{proof}

In the following subsections, we derive technical results related to
Theorem \ref{th:cons2}, in preparation for the proof of consistency of
the GR-AC method.

\subsection{Stress tensors}
\label{eq:cb_stress}
We can re-interpret Theorem \ref{th:cons2} in terms of a second-order
error estimate for certain stress tensors. If, for some $y \in \Ys_0$,
there exist tensor fields $\Sa(y; \bullet), \Sc(y; \bullet) \in
\PO(\T)^{2\times 2}$, which satisfy the identities
\begin{align}
  \<\del\Ea(y),u\>&=\sum_{T\in\T}|T|\Sa(y;T):\Dc{T} u, \quad
  \text{and} 
  \label{eq:atomstress}\\
  \<\del\Ec(y),u\>&=\sum_{T\in\T}|T|\Sc(y;T):\Dc{T} u
  \label{eq:contstress}
\end{align}
then we call $\Sa$ an atomistic stress tensor and $\Sc$ a continuum
stress tensors.

It follows from \eqref{eq:delEa:edges} and \eqref{eq:delEc:volume} that
\begin{align}
    \label{eq:defn_Sa}
    \Sa(y; T) :=~& \frac{1}{\Omega_0} \sum_{j = 1}^6 V_{x_{T,j},j} \otimes
    a_j, \qquad \text{and} \\
    \label{eq:defn_Sc}
    \Sc^1(y; T) :=~& \pp W(\Dc{T} y)=\frac{1}{\Omega_0} \sum_{j = 1}^6 V_{T,j} \otimes a_j
\end{align}
satisfy \eqref{eq:atomstress} and \eqref{eq:contstress},
respectively. As we will see immediately, they are not the unique
choices.

In the following calculation (and later on as well) we denote by $T_j$
the unique neighbouring element of $T \in \T$, which shares an edge
with direction $a_j$ with $T$; see Figure~\ref{fig:neighbortri}.

\begin{figure}
  \begin{center}
    \includegraphics[height= 2.5cm]{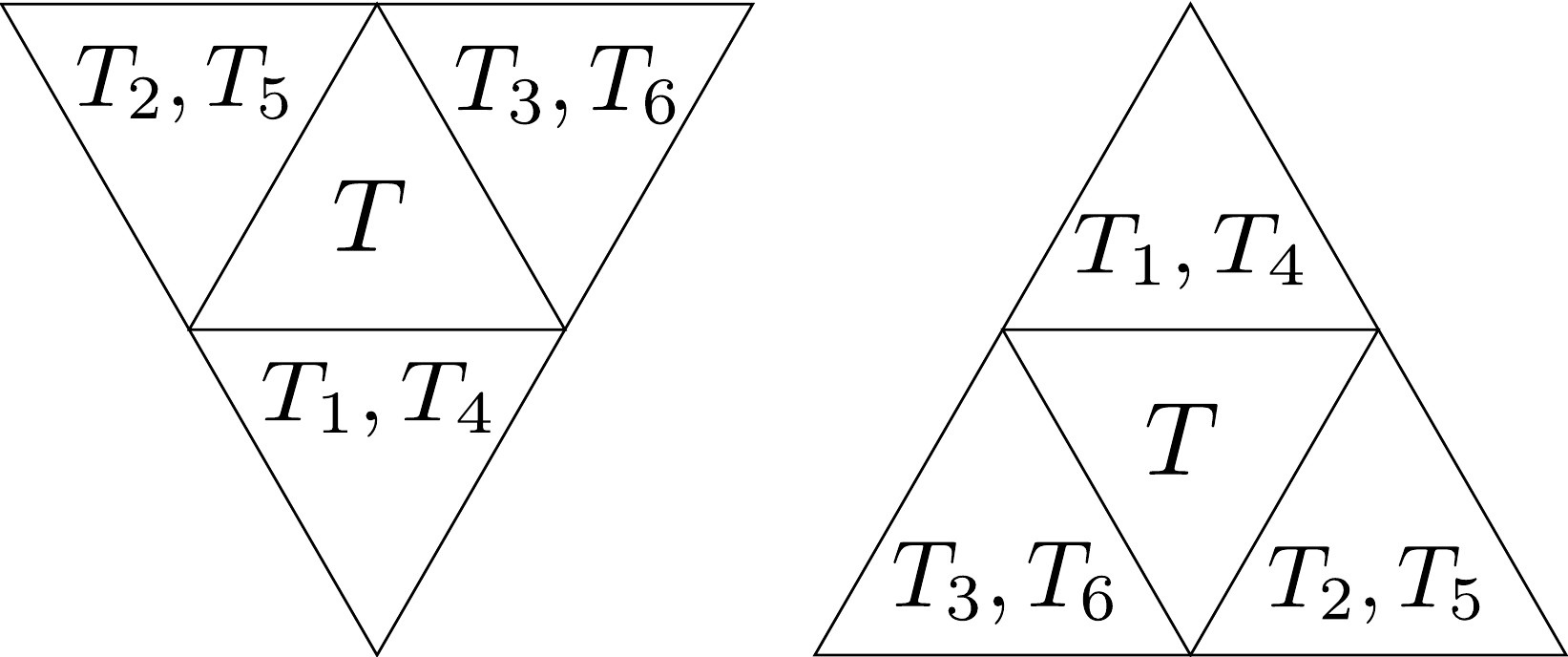} 
  \end{center}
  \caption{Notation for neighbouring triangles of $T\in\T$.}
  \label{fig:neighbortri}
\end{figure}

With this notation, and using the fact that $\Da{j}u(x_{T,j}) =
\Da{j}u(x_{T_j,j})$, we observe that
\begin{align}
  \label{eq:construction_Sc2}
 \b\< \del\Ec(y), u \b\> =~&  \sum_{T \in \T}|T| 
 \frac{1}{\Omega_0} \sum_{j = 1}^6 V_{T, j} \cdot \Da{j} u(x_{T, j})\\
 \nonumber
 = &  \sum_{T \in \T} |T| \frac{1}{\Omega_0} 
 \sum_{j = 1}^6 \frac{1}{2} \b(V_{T,j} + V_{T_{j},j} \b) \cdot \Da{j}
 u(x_{T,j})\\ 
 \nonumber
 =~&  \sum_{T \in \T} |T|
 \bg\{ \frac{1}{\Omega_0}\sum_{j = 1}^6 \frac{1}{2}
                         \b(V_{T,j} + V_{T_{j},j} \b) \otimes a_j \bg\}
                         : \D_T u \qquad
			      \forall u \in \Us_0,
\end{align}
which yields the alternative continuum stress tensor
\begin{equation}
  \label{eq:defn_Sc2}
  \Sc^2(y; T) := \frac{1}{\Omega_0}\sum_{j = 1}^6 \frac{1}{2} \b(V_{T,j} + V_{T_{j},j} \b)\otimes a_j.
\end{equation}

Furthermore, if we write the Cauchy--Born energy in terms of the site
energy \eqref{eq:defn_Vc}, and apply the procedure used to derive
$\Sa$, then we obtain a third variant of the continuum stress tensor:
\begin{equation}
  \label{eq:defn_Sc3}
  \Sc^3(y; T) :=\frac{1}{\Omega_0} \sum_{j = 1}^6 \Vc_{x_{T,j},j} \otimes a_j.
\end{equation}

We see that stress tensors are not uniquely defined by
\eqref{eq:contstress} and \eqref{eq:atomstress}. This causes
analytical difficulties when deriving consistency error estimates,
which strongly depend on the choice of the stress tensors. For example
we will show in the following result that $\Sc^2$ is second-order
consistent. By contrast, $\Sc^1$ and $\Sc^3$ are only first-order
consistent (cf. Remark \ref{rem:scb3_o1}).

\begin{lemma}
  \label{th:cb_stress}
  Let $y \in \Ys_0$, then 
  \begin{equation}
    \label{eq:stress_cb_o2}
    \b| \Sa(y; T) - \Sc^2(y; T) \b| \leq 
    c \b(M_3|D^2 y(x)|^2 + M_2|D^3 y(x)|\b)    
  \end{equation}
  for all $T \in \T, x \in T$.
\end{lemma}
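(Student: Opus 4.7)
The plan is to prove the pointwise stress estimate by Taylor-expanding each of the six terms in $\Sa(y;T) - \Sc^2(y;T)$ around the reference gradient $\mF := \Dc{T} y$, and then tracking the geometric cancellations in the resulting first-order finite differences. This is essentially a per-triangle localisation of the argument used to bound $\delta_j(x)$ in the proof of Theorem \ref{th:cons2}.

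First, I would write
\begin{equation*}
\Omega_0 \bigl[\Sa(y;T) - \Sc^2(y;T)\bigr] = \sum_{j=1}^6 \Delta_j \otimes a_j, \qquad \Delta_j := V_{x_{T,j},j} - \tfrac12\bigl(V_{T,j} + V_{T_j,j}\bigr),
\end{equation*}
and set $\mF_j := \Dc{T_j} y$, so that $V_{T,j} = V_{\mF,j}$ and $V_{T_j,j} = V_{\mF_j,j}$. Since all arguments $\Da{}y(x_{T,j})$, $\mF\mathbf{a}$, $\mF_j\mathbf{a}$ agree to within $O(|D^2 y(x)|)$ for $x \in T$, Taylor expanding $V_{x_{T,j},j}$ and $V_{\mF_j,j}$ around $\mF \mathbf{a}$ to first order, the zeroth-order terms $V_{\mF,j}$ cancel in $\Delta_j$ (the coefficients add up to $1 - \tfrac12 - \tfrac12 = 0$), while the quadratic Taylor remainder is controlled by $c M_3 |D^2 y(x)|^2$. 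Using $\mF a_i = \Da{i} y(x_{T,i})$ and $\mF_j a_i = \Da{i} y(x_{T_j,i})$, what remains is
\begin{equation*}
\Delta_j = \sum_{i=1}^6 V_{\mF,ji}\, \eps_{j,i} + O\bigl(M_3 |D^2 y(x)|^2\bigr), \quad \eps_{j,i} := \Da{i} y(x_{T,j}) - \tfrac12 \Da{i} y(x_{T,i}) - \tfrac12 \Da{i} y(x_{T_j,i}).
\end{equation*}

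Next, I would analyse $\eps_{j,i}$ geometrically. The three evaluation points $x_{T,j}, x_{T,i}, x_{T_j,i}$ all lie within the neighbouring pair $T \cup T_j$, and for each fixed $j$ a direct enumeration (analogous to the computation of $\eps_1,\ldots,\eps_6$ in the proof of Theorem \ref{th:cons2}) shows that for some indices $i$ the telescoping makes $\eps_{j,i}$ vanish identically, while for the remaining indices the three points form a symmetric arrangement across the shared edge, so that $\eps_{j,i}$ reduces to a second difference of $\Da{i} y$, i.e. a triple finite difference $\Da{a}\Da{b}\Da{i} y$ bounded by $|D^3 y(x)|$. Summing over $i$ and applying \eqref{eq:Lip_partialV} in the form $\sum_{i} |V_{\mF, ji}| \leq M_2$ yields $|\Delta_j| \leq c(M_2|D^3 y(x)| + M_3|D^2 y(x)|^2)$, and summing over $j$ gives \eqref{eq:stress_cb_o2}. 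The main obstacle is the combinatorial bookkeeping in the enumeration of the $\eps_{j,i}$, but since the geometry is local and point symmetry $V_{\mF,j+3} = -V_{\mF,j}$ holds at the reference gradient $\mF$, this is no more involved than the corresponding step in Theorem \ref{th:cons2}.
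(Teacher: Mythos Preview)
There is a genuine gap in your argument at the step where you claim that each $\eps_{j,i}$ is either zero or ``a second difference of $\Da{i}y$, i.e.\ a triple finite difference''. This is false. Take the upward triangle $T=\conv\{0,a_1,a_2\}$, $j=1$, $i=2$: then $x_{T,1}=x_{T,2}=0$ while $x_{T_1,2}=a_6$, so
\[
\eps_{1,2}=\Da{2}y(0)-\tfrac12\Da{2}y(0)-\tfrac12\Da{2}y(a_6)=-\tfrac12\,\Da{6}\Da{2}y(0),
\]
which is only a \emph{second} finite difference of $y$. The three evaluation points $x_{T,j},x_{T,i},x_{T_j,i}$ are in general \emph{not} symmetrically placed across the shared edge (here two of them coincide), so the ``centred second difference'' picture you rely on does not hold. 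Consequently the bound $|\Delta_j|\le c(M_2|D^3y|+M_3|D^2y|^2)$ fails for the individual $\Delta_j$.

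What rescues the estimate is precisely the pairing you only allude to at the end. One has $\Sa-\Sc^2=\frac{1}{\Omega_0}\sum_{j=1}^3(\Delta_j-\Delta_{j+3})\otimes a_j$, and a short computation (using $T_{j+3}=T_j$, $x_{T,j+3}=x_{T,j}+a_j$ and the identity $V_{T,j+3}=-V_{T,j}$ from \eqref{eq:pt_symm_D}) shows that $\Delta_j-\Delta_{j+3}=\delta_j(x_{T,j})$, the very quantity already estimated in the proof of Theorem~\ref{th:cons2}. Equivalently, in your Taylor-expansion language, one must invoke the \emph{Hessian} symmetry $V_{\mF,j\,i}=V_{\mF,(j+3)(i+3)}$ from \eqref{eq:pt_symm_D2} to pair $(j,i)$ with $(j+3,i+3)$ in the double sum; only the combinations $\eps_{j,i}-\eps_{j+3,i+3}$ are genuine third differences. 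This pairing is exactly what the paper means by ``reversing the construction of $\Sc^2$'': the tensor difference collapses to the edge quantities $\delta_j$, and the estimate follows immediately from the bounds already obtained there.
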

\begin{proof}
  This estimate is obtained by reversing the construction of $\Sc^2$
  in \eqref{eq:construction_Sc2}, and applying the estimates obtained
  in the proof of Theorem \ref{th:cons2}.
\end{proof}

\begin{remark}
  \label{rem:scb3_o1}
  Taylor expansions show that $\Sc^k$, $k = 1,3$, are only first-order
  consistent, 
  \begin{displaymath}
    \b| \Sa(y; T) - \Sc^k(y; T) \b| \leq c M_2|D^2
    y(x)| \quad \text{for } x \in T,
 \end{displaymath}
 but that a second-order estimate such as \eqref{eq:stress_cb_o2}
 would be false. The first-order estimate can also be obtained from
 the fact that $\Sa(\yF; \bullet) = \Sc^k(\yF; \bullet) = \D W(\mF)$
 for all $\mF\in \R^{2 \times 2}$.
\end{remark}

\subsection{Divergence-free stress tensors}

In the previous subsection, we have seen that the stress functions
defined in \eqref{eq:atomstress} and \eqref{eq:contstress} are not
unique. It is therefore crucial to characterize all divergence-free
tensors, which is the purpose of the present section. We call a
piecewise constant tensor $\sigma\in\PO(\T)^{2\times 2}$
\textit{divergence free}, if it satisfies 
\begin{equation}
  \label{eq:kerdu}
  \int_{\R^2} \sigma : \Dc{} u \dx = \sum_{T\in\T} |T| \sigma(T) : \D_T u = 0 \qquad \forall u \in \Us_c.
\end{equation}

Divergence-free tensors can be characterised as 2D-curls of
non-conforming Crouzeix--Raviart finite elements. Let $\CR(\T)$ be
defined by
\begin{displaymath}
  \CR(\T):=\Big\{v:\R^2\to\R\,\Big{|}
  \substack{\mbox{$v|_{\text{int}(T)}$ is linear for each $T\in\T$} \\
    \mbox{ $v$ is continuous at all edge midpoints }}\Big\}.
\end{displaymath}
The degrees of freedom for functions $w \in \CR(\T)$ are the nodal
values at edge midpoints, $w(q_f)$, $f \in \Fs$, and the associated
nodal basis functions are denoted by $\zeta_f$.

We have the following characterization lemma
\cite{PolthierPreuss:2002} for divergence free tensor fields.
Although we will never use the equivalence of the characterisation
explicitly, it motivates much of our subsequent analysis. ***!***

\begin{lemma}
  \label{th:divfree}
  A tensor field $\sigma \in \PO(\T)^{2\times 2}$ is divergence-free
  (i.e., satisfies \eqref{eq:kerdu}) if and only if there exists
  $\psi\in \CR(\T)^2$, such that $\sigma = \D \psi \mJ$, where $\mJ$
  is the rotation by $\pi/2$.
\end{lemma}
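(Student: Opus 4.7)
The plan is to reduce the statement to the scalar case (component-by-component on $\psi$, row-by-row on $\sigma$), so that it suffices to prove: a piecewise constant vector field $s \in \PO(\T)^2$ is divergence-free in the sense of \eqref{eq:kerdu} if and only if there exists a scalar $\psi \in \CR(\T)$ with $s = \mJ^\top \nabla \psi$. The tensor identity $\sigma = \D \psi \mJ$ of the lemma then follows by applying this scalar equivalence to each row of $\sigma$ separately, since for scalar $\psi$ the row identity reads $(\nabla\psi)^\top \mJ = (\mJ^\top \nabla\psi)^\top$.

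The key intermediate step is to recast the divergence-free condition \eqref{eq:kerdu} locally at each lattice vertex. Since the admissible test functions, identified with their $\PI$-interpolants, are linear combinations of the hat functions $\phi_x$, \eqref{eq:kerdu} is equivalent to the system of vertex conditions $\sum_{j=1}^6 s(T_{x,j}) \cdot \nabla\phi_x|_{T_{x,j}} = 0$ for every $x \in \L$. A direct geometric computation on the triangular lattice gives $\nabla \phi_x|_{T_{x,j}} = -(2/\sqrt 3)\,\mJ^\top a_{j+2}$, and using $a_{j+2} = a_{j+1} - a_j = 2(m_j - m_{j-1})$, where $m_j := x + a_{j+1}/2$ are the consecutive edge midpoints around $x$, the vertex condition becomes, up to a nonzero multiplicative constant,
\begin{equation*}
    \sum_{j = 1}^6 \b(\mJ s(T_{x,j})\b) \cdot (m_j - m_{j-1}) = 0.
\end{equation*}

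With this reformulation in hand, the ``if'' direction is a telescoping identity: given $\psi \in \CR(\T)$ and setting $s := \mJ^\top \nabla \psi$, one has $\mJ s|_{T_{x,j}} = \nabla \psi|_{T_{x,j}}$, so
\begin{equation*}
    \sum_j \nabla \psi|_{T_{x,j}} \cdot (m_j - m_{j-1}) = \sum_j \b[\psi|_{T_{x,j}}(m_j) - \psi|_{T_{x,j}}(m_{j-1})\b] = 0,
\end{equation*}
where the last equality uses the CR midpoint continuity $\psi|_{T_{x,j}}(m_j) = \psi|_{T_{x,j+1}}(m_j)$ to make the sum telescope around $x$.

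For the ``only if'' direction I would reverse this construction: fix one edge midpoint $m_0$, set $\psi(m_0) := 0$, and define $\psi$ at any other midpoint $m$ by accumulating increments $\mJ s(T) \cdot (m - m')$ across each triangle $T$ along a path of adjacent triangles in the dual graph. Well-definedness reduces to path-independence around elementary vertex loops, which is exactly the vertex divergence-free condition derived above; since the triangle-adjacency graph of $\R^2$ is simply connected, local consistency at each vertex extends to global consistency. By construction $\nabla \psi|_T = \mJ s|_T$, hence $s = \mJ^\top \nabla \psi$, and $\psi \in \CR(\T)$ because the construction forces adjacent triangles to share midpoint values. The main obstacle will be the careful bookkeeping of signs and rotation orientations in the geometric identity $\nabla \phi_x|_{T_{x,j}} \propto \mJ^\top a_{j+2}$, together with rigorously reducing global path-independence to elementary vertex loops; both are standard steps but require care on an infinite lattice with compactly supported tests.
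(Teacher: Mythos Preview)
Your argument is correct and constitutes a genuinely different proof from the paper's. The paper argues the ``only if'' direction by dimension counting on a bounded simply connected patch $\Omega$: the space of divergence-free piecewise constant tensors has dimension at most $4\#T - 2\#V_I$, the space $\D\CR(\Omega)^2\mJ$ has dimension $2\#E - 2$, and Euler's formula together with an edge-counting identity forces these dimensions to coincide, so the inclusion established in the ``if'' direction is an equality; uniqueness up to a shift then lets one pass to the full lattice. Your approach instead builds $\psi$ explicitly by path-integrating the rotated field along chains of edge midpoints, reducing well-definedness to the vanishing of the vertex circulation, which you identify with the divergence-free condition; this is essentially a discrete Poincar\'e lemma for the Crouzeix--Raviart complex. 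The paper's route is shorter and purely linear-algebraic, but it is non-constructive and requires the bounded-domain detour. Your route works directly on the infinite lattice and yields an explicit formula for $\psi$ in terms of $\sigma$; this is in fact closer in spirit to how the paper actually \emph{uses} the lemma later (cf.\ the explicit corrector in Lemma~\ref{th:psic}), where concrete edge-midpoint values are computed. The one point you should tighten is the claim that vertex loops generate the cycle space of the midpoint graph: this follows because that graph is the $1$-skeleton of a cell complex tiling $\R^2$ whose $2$-cells are precisely the triangle $3$-cycles (on which the increment vanishes trivially) and the vertex $6$-cycles, so simple connectedness of $\R^2$ gives the result.
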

\begin{proof}
  It is easy to show that every tensor of the form $\sigma = \D w
  \mJ$, $w \in \CR(\T)^2$ satisfies \eqref{eq:kerdu}, by checking the
  result for a single nodal basis function $\psi = \zeta_f$.

  To show the reverse, let $\Omega$ be a simply connected domain,
  which is a union of triangles $T \in \T$. Suppose that the number of
  vertices in $\Omega$ is $\#V$, the number of interior vertices is
  $\#V_I$, the number of edges in $\Omega$ is $\#E$, and the number of
  triangles in $\Omega$ is $\#T$.

  We test \eqref{eq:kerdu} for all $u \in \Us_0$ that are non-zero
  only in the interior of $\Omega$. The dimension of all $\sigma \in
  \PO(\Omega)^{2 \times 2}$ satisfying \eqref{eq:kerdu} for those $u$
  can be at most $4\#T-2\#V_I$. On the other hand, the dimension of
  $\CR(\Omega)^2$ is $2\# E$ and the dimension of rotated gradients of
  Crouzeix--Raviart functions, denoted by $\D \CR(\Omega)^2 \mJ$, is
  $2\# E - 2$. We will show below that the following formula holds:
  \begin{equation}
    \label{eq:mesh_counting}
    4\#T-2\#V_I \leq 2 \# - 2,
  \end{equation}
  which immediately implies that the subspace of divergence-free
  tensor coincides with $\D \CR(\Omega)^2 \mJ$. Moreover, the
  representation is of course unique (up to a shift) and therefore
  independent of the choice of the domain.

  To prove \eqref{eq:mesh_counting}, we use Euler's formula,
  \begin{equation}
    \label{eq:mesh_counting_1}
    \#V-\#E+\#T = 1,
  \end{equation}
  and the identify
  \begin{equation}
    \label{eq:mesh_counting_2}
     3\#F=2\#E-\#V+\#V_I,
  \end{equation}
  which is obtained by a simple counting argument. (Note that $\#V -
  \#V_I$ is the number of boundary edges.)  Subtracting
  \eqref{eq:mesh_counting_1} from \eqref{eq:mesh_counting_2} yields
  \eqref{eq:mesh_counting}.
\end{proof}

\subsection{Continuum stress tensor correctors}
We have different forms of continuum stress $\Sc^1$, $\Sc^2$ and
$\Sc^3$, which all can be used to represent $\del\Ec$ in the form
\eqref{eq:contstress}, and hence their differences must be divergence
free.  Lemma \ref{th:divfree} characterises the form of these
differences and motivates the following result.

\def\psic{\psi^{23}}
\begin{lemma}
  \label{th:psic}
  Let $y \in \Ys_0$, then there exists a corrector $\psic(y; \bullet)
  \in \CR(\T)^2$ satisfying the following two properties:
  \begin{align}
    \label{eq:psic_correctorprop}
    \text{Corrector property: } ~& &
    \Sc^3(y;T) - \Sc^2(y;T) =~& \D \psic(y; T) \mJ
    \qquad \forall\, T \in \T; \\
    \text{Lipschitz property: } ~& &
    \label{eq:psic_lipprop}
    \b| \psic(y; m_f)\b| \leq~& \smfrac16 M_2 \| D^2 y \|_{\ell^\infty(f
      \cap \L)} \qquad \forall f \in \Fs.
  \end{align}
\end{lemma}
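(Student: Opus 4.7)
The plan has two parts: the existence of $\psic$ satisfying the corrector property \eqref{eq:psic_correctorprop}, and the verification of the Lipschitz bound \eqref{eq:psic_lipprop}.

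For the corrector property, I would observe that both stress tensors $\Sc^2(y;\bullet)$ and $\Sc^3(y;\bullet)$ satisfy \eqref{eq:contstress}, representing $\del\Ec(y)$, so their difference $\sigma := \Sc^3(y;\bullet) - \Sc^2(y;\bullet) \in \PO(\T)^{2\times 2}$ is divergence-free in the sense of \eqref{eq:kerdu}. Applying Lemma \ref{th:divfree} componentwise yields $\psic(y;\bullet) \in \CR(\T)^2$, unique up to an additive constant in $\R^2$, with $\sigma = \D\psic\,\mJ$, which is precisely \eqref{eq:psic_correctorprop}.

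For the Lipschitz bound, I would derive an explicit edge-local formula for $\psic(y;m_f)$. Pairing indices $j$ and $j+3$ in the defining sums for $\Sc^2(y;T)$ and $\Sc^3(y;T)$, using $a_{j+3} = -a_j$ together with the point symmetry $V_{T,j+3} = -V_{T,j}$ from \eqref{eq:pt_symm_D} (which is available because the argument $\Dc{T} y \cdot \mathbf{a}$ is of the form $\mF\mathbf{a}$), one obtains the edge-wise decomposition
\[
  \sigma|_T = \frac{1}{\Omega_0} \sum_{e \subset T} Y_e \otimes a_{j(e)}, \qquad
  Y_f = V^c_{x, j} - V^c_{x+a_j, j+3} - V_{T, j} - V_{T', j},
\]
where $f = (x, x+a_j)$ is shared by triangles $T, T'$. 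Matching this with the CR-gradient representation $\D\psic\,\mJ|_T = \sum_{e \subset T} \frac{|e|}{|T|}\,\psic(m_e) \otimes (\mJ^\top n_e^T)$ determines $\psic(y;m_e)$ up to a single global additive constant; the linear dependence $a_1 - a_2 + a_3 = 0$ among the three edge-tangents exactly absorbs the ambiguity between the six-term and three-term representations. The global constant is then fixed by requiring $\psic(y;m_f) \to 0$ as $|m_f| \to \infty$, which is well-defined since $y - \id$ has compact support.

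Finally, \eqref{eq:psic_lipprop} is obtained by Taylor-expanding $Y_f$ around the locally homogeneous state $\mF_0 := \Dc{T} y$: substituting $V^c_{x,j} = \pp_j V^c(\mF_0\mathbf{a}) + O(|Dy(x) - \mF_0\mathbf{a}|)$ and an analogous expansion for $V^c_{x+a_j, j+3}$, the leading-order pieces combine to reproduce the constant absorbed by the global normalization (reflecting the identity $\Sc^3(\yF;T) = \Sc^2(\yF;T) = \pp W(\mF)$ for homogeneous deformations), leaving first-order remainders bounded via \eqref{eq:Lip_partialV} and its analogue for $\pp V^c$. The quantities $|Dy(x) - \mF_0\mathbf{a}|$ and $|\Dc{T'}y - \mF_0|$ are each bounded by $\|D^2 y\|_{\ell^\infty(f \cap \L)}$, and the Lipschitz constant of $\pp V^c$ is controlled by $M_2/6$, the factor $1/6$ arising from the prefactor in \eqref{eq:defn_Vc}. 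The main obstacle is the bookkeeping that shows the global constant absorbs exactly the ``affine-case'' contribution: after Taylor expansion, one must verify that the three per-edge formulas on each $T$ are simultaneously consistent with a single triangle-wise constant, and that these triangle-wise constants are, in turn, globally consistent; once this is established, the Lipschitz estimate is a direct application of \eqref{eq:Lip_partialV}.
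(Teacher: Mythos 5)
Your plan for the corrector property (both tensors represent $\del\Ec$, so their difference is divergence-free and Lemma \ref{th:divfree} gives some $\psic$ up to a constant) is fine and matches the paper. The gap is in the Lipschitz part, and it is exactly the step you defer as ``bookkeeping''. Your edge-wise identity $\sigma|_T = \frac{1}{\Omega_0}\sum_{f\subset T} Y_f\otimes a_{j(f)}$ with $Y_f = \Vc_{x,j}-\Vc_{x+a_j,j+3}-V_{T,j}-V_{T',j}$ is correct, but it cannot be matched term-by-term to a Crouzeix--Raviart gradient: $Y_f\otimes a_{j(f)}$ is \emph{symmetric} under interchanging the two triangles sharing $f$ (the same tensor appears in the decomposition of $\sigma$ on both sides), whereas by \eqref{eq:DzetaJ} the contribution $\psic(m_f)\otimes(\D\zeta_f\mJ)$ changes sign across $f$. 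So the naive assignment $\psic(m_f)\propto Y_f$ reproduces $\sigma$ in one of the two adjacent triangles and has the wrong sign in the other. Moreover $Y_f$ does not vanish for homogeneous deformations: using $\pp_i\Vc(\mF{\bf a})=\smfrac13 V_{\mF,i-1}+\smfrac23 V_{\mF,i}+\smfrac13 V_{\mF,i+1}$ one finds $Y_f=\smfrac23(V_{\mF,j-1}+V_{\mF,j+1}-V_{\mF,j})$ for $y=\yF$, which is generically nonzero; since \eqref{eq:psic_lipprop} forces $\psic(\yF;m_f)=0$, any corrector built directly from the $Y_f$ violates the bound. The per-triangle sums only cancel because of the relation among the three edge directions, so the required fix is a genuine redistribution of the coefficients within each triangle, and one must then check that the redistributed values agree from both sides of every edge and vanish for locally affine $y$ --- this is not ``a single global additive constant'', nor can it be fixed by a normalization at infinity.

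The paper resolves precisely this point by an explicit computation: on each $T$ the difference $\Sc^3-\Sc^2$ can be written with edge coefficients that are \emph{jumps} across that edge, leading to the explicit choice \eqref{eq:defn_psic}, $\psic(y;m_f)=\smfrac16\sum_{j\in\{1,3,5\}}(V_{T,j}-V_{T_1,j})$. The jump form is antisymmetric in $T\leftrightarrow T_1$, hence compatible with the sign flip of $\D\zeta_f\mJ$ and single-valued as a CR function; it vanishes when $y$ is locally affine; and the bound \eqref{eq:psic_lipprop} with the constant $\smfrac16 M_2$ then follows from \eqref{eq:Lip_partialV} together with the observation that $(\Dc{T_1}y-\Dc{T}y)a_j$ is a second difference of $y$ supported on $f\cap\L$. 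Without this explicit (or an equivalent) formula, neither the well-definedness of your candidate corrector nor the stated midpoint bound is established; note also that because $\psic$ is only unique up to a constant, the bound \eqref{eq:psic_lipprop} is a statement about a specific representative, so the existence argument via Lemma \ref{th:divfree} alone cannot deliver it.
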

\begin{proof}
  Property~\eqref{eq:psic_correctorprop} follows of course from Lemma
  \ref{th:divfree}, however, to establish~\eqref{eq:psic_lipprop} we
  require an explicit expression of $\psic$. We give the details of
  the proof for the case of an upward pointing triangle $T \in \T$
  (cf. the left configuration in Figure \ref{fig:neighbortri}). An
  elementary computation, starting from \eqref{eq:defn_Sc2} and
  \eqref{eq:defn_Sc3} and using the symmetry property
  \eqref{eq:pt_symm_D}, yields
  \begin{align*}
    \label{eq:diffcstress3}
     \Sc^3(y; T) - \Sc^2(y;T) =~& \frac{1}{3\Omega_0} \b[(V_{T,1} -
     V_{T_1,1}) + (V_{T,3} - V_{T_1,3}) + (V_{T,5} - V_{T_1,5})\b]
     \otimes a_1 + \dots,
  \end{align*}
  where ``$\dots$'' stands for terms that are symmetric to the ones in
  the first line. The directions $a_1, a_3, a_5$ are chosen
  anti-clockwise with respect to the element $T$.

  We now observe that, if $f$ is an edge of $T$ with direction $a_j$,
  $j \in \{1,3,5\}$, then 
  \begin{equation}
    \label{eq:DzetaJ}
    \D\zeta_f \mJ = \cases{
      - \smfrac{2}{\Omega_0} a_j^\top, & \text{ in } T, \\[1mm]
      \smfrac{2}{\Omega_0} a_j^\top, & \text{ in } T_j.
    }
  \end{equation}
  Let $f$ be the edge of $T$ with direction $a_1$, then choosing
 \begin{equation}
    \label{eq:defn_psic}
    \psic(y; m_f) := \frac{1}{6}(V_{T,1} - V_{T_1,1}) + \frac{1}{6}(V_{T,3} - V_{T_1,3}) + \frac{1}{6}(V_{T,5} - V_{T_1,5}),
  \end{equation}
  and making analogous choices for the remaining edges, we obtain
  \eqref{eq:psic_correctorprop}.

  With this explicit representation we can now prove the Lipschitz property
  \eqref{eq:psic_lipprop}. Let $f$ denote the edge of $T$ with
  direction $a_1$, $\mF := \Dc{T} y$ and $\mF_1 := \Dc{T_1} y$; then 
  \begin{align}
    \notag
    \b| \psic(y; m_f) \b| \leq~& \smfrac16 \b| V_{\mF_1,1} -
    V_{\mF,1}\b| + \smfrac16 \b| V_{\mF_1,2} - V_{\mF,2} \b| + \smfrac16 \b| V_{\mF_1,3} - V_{\mF,3} \b|\\
    \label{eq:psic_lipproof:1}
    \leq~& \smfrac16 \sum_{j = 1}^6 \b(| V_{,1j}|+|V_{,2j}|+|V_{,3j}|\b) \b|
    (\mF_1 - \mF) a_j \b| 
    \leq \smfrac16 M_2 \max_{j = 1, \dots, 6} \b|(\mF_1-\mF)a_j\b|,
  \end{align}
  where $V_{,1j} = \pp_{1j} V(\mG_j \cdot {\bf a})$ for some $\mG_j \in
  \R^{2 \times 2}$, and $M_2$ is defined in \S\ref{sec:prop_V}.  One
  now verifies that
  \begin{displaymath}
    (\mF_1 - \mF) a_1 = 0,
    \quad
    (\mF_1 - \mF) a_2 = \Da{6}\Da{2} y(x_{T,1}), 
    \quad \text{and} \quad
    (\mF_1 - \mF) a_3 = \Da{5}\Da{3} y(x_{T,5}),
  \end{displaymath}
  which implies
  \begin{displaymath}
    \max_{j = 1, \dots, 6} \b|(\mF_1-\mF)a_j\b| \leq
    \max\b(|D^2y(x_{T,1})|, |D^2y(x_{T,4})|\b).
  \end{displaymath}
  Combining this estimate with \eqref{eq:psic_lipproof:1} we obtain
  \eqref{eq:psic_lipprop} for edges aligned with $a_1$. The remaining
  cases follow from symmetry considerations.
\end{proof}


\section{Consistency of the GR-AC Method}
\label{sec:consistency}
We are now ready to state the second main result of this paper. The
proof is established in \S\ref{sec:Sac} through
\S\ref{sec:main_proof}. For the remainder of this section we assume
that the hypotheses stated in Theorem~\ref{th:2d:stress_err_Is} hold.

\def\Isext{\Is^{\rm ext}}

\begin{theorem}
  \label{th:2d:stress_err_Is}
  Let $\Eac$ be defined by \eqref{eq:defn_Vac}, with parameters
  $(C_{x,i,j})_{i,j = 1}^6$, $x \in \Is$, satisfying the one-sidedness
  condition \eqref{eq:simple_Cxji}, as well as the patch test
  conditions \eqref{eq:energy_cons} and \eqref{eq:force_cons}. Suppose
  in addition that the parameters are bounded, that is, 
  \begin{displaymath}
    \sup_{x \in \Is} \max_{j, i \in \{1,\dots,6\}}
    |C_{x,j,i}| =: \bar{C} < +\infty.
  \end{displaymath}
  Then there exists a constant $C_\Is = C_\Is(\bar{C})$, such that
  \begin{equation}
    \label{eq:main_cons_est}
    \b\| \del\Eac(y) - \del\Ea(y) \b\|_{\Us^{-1,p}} \leq c \b(C_\Is M_2 \|
    D^2 y\|_{\ell^{p}(\Isext)}  + M_2 \| D^3
    y\|_{\ell^p(\Cs)} + M_3 \| D^2 y\|_{\ell^{2p}(\Cs)}^2 \b),
  \end{equation}
  where $\Isext := \{ x \in \L \bsep {\rm dist}(x, \Is) \leq 1 \}$ is an
  extended interface region.
\end{theorem}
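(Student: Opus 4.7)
The plan is to recast the modelling error in stress form and exploit patch test consistency as the algebraic fact that forces cancellation of stress mismatches at affine deformations. First I would mimic the derivation of $\Sc^3$ in \eqref{eq:defn_Sc3}, applied to the hybrid site potentials $\Vac_x$, to produce an a/c stress tensor
\begin{displaymath}
  \Sac(y;T) := \frac{1}{\Omega_0} \sum_{j=1}^6 \pp_j \Vac_{x_{T,j}}(Dy(x_{T,j})) \otimes a_j,
\end{displaymath}
for which a direct calculation gives $\< \del\Eac(y),u\> = \sum_{T\in\T} |T|\, \Sac(y;T):\Dc{T}u$ for all $u \in \Us_0$. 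By \eqref{eq:defn_Vac}, $\Sac(y;T) = \Sa(y;T)$ whenever all three vertices of $T$ lie in $\As$, and $\Sac(y;T) = \Sc^3(y;T)$ whenever they all lie in $\Cs$.

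Next I would modify $\Sac$ by a divergence-free correction $\D\Psi\,\mJ$ with $\Psi\in \CR(\T)^2$, chosen so that the resulting stress $\widetilde{\Sac} := \Sac - \D\Psi\,\mJ$ equals $\Sc^2$ on triangles whose star lies entirely in $\Cs$ and equals $\Sa$ on triangles whose star lies entirely in $\As$. On continuum-interior edges this amounts to choosing the midpoint values $\Psi(y;m_f) = \psic(y;m_f)$ from Lemma~\ref{th:psic}; on the remaining edges, the Crouzeix--Raviart degrees of freedom of $\Psi$ are fixed by propagating the construction across the interface. The existence of a consistent extension is precisely what patch test consistency guarantees: for any affine deformation $\yF$ the force $f^\ac(x;\yF)$ vanishes for every $x\in\L$, hence $\Sac(\yF;\cdot) - \Sa(\yF;\cdot)$ is divergence free in the sense of \eqref{eq:kerdu}, and by Lemma~\ref{th:divfree} it is a rotated Crouzeix--Raviart gradient. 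Together with the assumed uniform bound $|C_{x,j,i}|\leq \bar C$, a calculation analogous to \eqref{eq:psic_lipproof:1} yields a Lipschitz bound $|\Psi(y;m_f)| \leq c\, C_\Is(\bar C)\, M_2\, \|D^2 y\|_{\ell^\infty(f\cap\L)}$ for every edge $f$ in the interface strip, with $C_\Is$ depending only on $\bar C$.

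Since $\D\Psi\,\mJ$ is divergence free,
\begin{displaymath}
  \< \del\Eac(y) - \del\Ea(y), u\> = \sum_{T\in\T} |T|\, [\widetilde{\Sac}(y;T) - \Sa(y;T)] : \Dc{T}u.
\end{displaymath}
The triangles split into three classes: (i) those with star contained in $\As$, where $\widetilde{\Sac} = \Sa$ and the contribution vanishes; (ii) triangles with star contained in $\Cs$, where $\widetilde{\Sac} = \Sc^2$ and Lemma~\ref{th:cb_stress} gives $|\widetilde{\Sac}(y;T) - \Sa(y;T)| \leq c\,(M_3|D^2y|^2 + M_2|D^3y|)$ locally; (iii) a bounded-width interface strip contained in (a neighbourhood of) $\Isext$, on which a first-order Taylor expansion of $\widetilde{\Sac}(y;T)-\Sa(y;T)$ about $\yF$ with $\mF = \Dc{T}y$, combined with the affine-exactness $\widetilde{\Sac}(\yF;T) = \Sa(\yF;T) = \pp W(\mF)$ and the Lipschitz bound on $\Psi$, yields $|\widetilde{\Sac}(y;T) - \Sa(y;T)| \leq c\, C_\Is\, M_2\, \|D^2 y\|_{\ell^\infty(\omega_T)}$ for a bounded neighbourhood $\omega_T$ of $T$.

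Applying H\"older's inequality and summing the three classes gives \eqref{eq:main_cons_est} after dividing out $\|\Dc{}u\|_{\LL^{p'}}$ (and absorbing the bounded overlap between $\omega_T$ and $\Isext$ into the constant). I expect the main obstacle to be the rigorous construction of $\Psi$ near interface corners: one must verify that the Crouzeix--Raviart degrees of freedom at edges straddling the interface can be chosen so that both the continuum-interior identity $\widetilde{\Sac}=\Sc^2$ \emph{and} the affine exactness $\widetilde{\Sac}(\yF;T)=\Sa(\yF;T)$ hold simultaneously without generating a non-Lipschitz contribution. Propositions~\ref{th:params:flat} and \ref{th:params:corner} supply the precise algebraic identities on $C_{x,j,i}$ that make this extension possible, and Assumption~\ref{th:geninterface} ensures that only one corner at a time needs to be considered.
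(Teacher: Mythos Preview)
Your proposal is correct and follows essentially the same route as the paper: define $\Sac$ from the hybrid site potentials, subtract a divergence-free Crouzeix--Raviart corrector to obtain a modified stress that equals $\Sa$ on $\Ta$, equals $\Sc^2$ on $\Tc$, and is affine-exact, then split the error into continuum and interface contributions and apply Lemma~\ref{th:cb_stress} on the former and a Lipschitz-plus-affine-exactness argument on the latter. The one place where the paper is more specific than your ``propagating the construction across the interface'' is the actual definition of $\Psi$ on interface edges: the paper first computes $\psiac(\mF;\cdot)$ explicitly for homogeneous $\yF$ and verifies by direct calculation (in the appendix, case-by-case over the corner geometries allowed by Assumption~\ref{th:geninterface}) that $\psiac(\mF;m_f)=0$ for all $f\in\Fsa\cup\Fsc$ --- this locality does \emph{not} follow from Lemma~\ref{th:divfree} alone --- and then for general $y$ sets $\Psi(y;m_f):=\psiac(\mF_f(y);m_f)$ on $f\in\Fsi$, where $\mF_f(y)$ is the average of $\Dc{}y$ over the two triangles adjacent to $f$.
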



\subsection{An a/c stress tensor}
\label{sec:Sac}
Following the construction of $\Sa$ in \eqref{eq:defn_Sa} (with $\Ea$
replaced by $\Eac$), we obtain a representation of $\del\Eac$ in terms
of an a/c stress $\Sac$: let $y \in \Ys_0$ and $u \in \Us_0$, then 
\begin{align}
  \label{eq:Sac_represenation}
  \b\< \del\Eac(y), u \b\> =~& \sum_{T \in \T} |T| \Sac(y; T) : \Dc{T}
  u, \qquad \text{where} \\
  \label{eq:defn_Sac}
  \Sac(y; T) :=~& \frac{1}{\Omega_0} \sum_{j = 1}^6 \Vac_{x_{T,j},j}\otimes a_j,
\end{align}
and we recall that $\Vac_{x,j} = \pp_j\Vac(x; Dy(x))$. We now require
the following additional notation:
\begin{align}
  \notag
  \Ta :=~& \{T\in\T \sep T\cap(\Is\cup\Cs)=\emptyset\}, & 
  \Fsa :=~& \Fs \cap \Ta, \\
  \label{eq:defn_Taci_Faci}
  \Tc :=~& \{T\in\T \sep T\cap(\Is\cup\As)=\emptyset\},  &
  \Fsc :=~& \Fs \cap \Tc, \\
  \notag
  \Ti :=~& \T \setminus (\T_\Cs\cup\T_\As), \quad \text{and} &
  \Fsi :=~& \Fs \setminus (\Fsc\cup\Fsa).
\end{align}

\def\psiac{\psi^\ac}
\def\psiy{\hat{\psi}^\ac}
\begin{lemma}
  \label{th:ac_stress}
  {\it (i) } Let $\Sac$ be defined by \eqref{eq:defn_Sac}, then, for
  all $y \in \Ys_0$,
  \begin{align}
    \label{eq:Sac_eq_sa}
    \Sac(y;T) =~& \Sa(y;T) \qquad \forall T \in \Ta, \quad \text{and}
    \\
    \label{eq:Sac_eq_Sc3}
    \Sac(y;T) =~& \Sc^3(y;T) \qquad \forall T \in \Tc.
  \end{align}
 
  {\it (ii) } Let $\mF \in \R^{2 \times 2}$; then there exists a
  unique $\psiac(\mF; \bullet) \in \CR(\T)^2$ such that
  \begin{align}
    \label{eq:defn_psi_1}
    \Sac(\yF; T) - \Sa(\yF; T) =~& \D \psiac(\mF; T) \mJ \qquad \forall T
    \in \T, \quad \text{and} \\
    \label{eq:defn_psi_2}
    \psiac(\mF; m_f) =~& 0 \qquad \forall f \in \Fsa \cup \Fsc.
  \end{align}
  Moreover, there exists $L_\ac$ depending only on $\bar{C}$ such that
  the following Lipschitz property holds:
  \begin{equation}
    \label{eq:psi_Lip}
    \b|\psiac(\mF;m_f)-\psiac(\mG;m_f)\b| 
    \leq L_\ac M_2 |\mF-\mG|  
   \qquad \forall \mF, \mG \in \R^{2 \times 2}, \quad f \in \Fsi.
  \end{equation}  
\end{lemma}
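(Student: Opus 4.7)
The plan is to dispatch part (i) directly from the definitions, and then attack part (ii) in four stages: divergence-freeness of the stress difference via the patch test, existence of the potential via Lemma \ref{th:divfree}, the boundary normalisation from part (i), and finally the Lipschitz estimate.

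For (i), observe that if $T \in \Ta$ then every vertex of $T$ lies in $\As$, so by \eqref{eq:defn_Vac} we have $\Vac_x = V$ at each such vertex, and therefore $\Vac_{x_{T,j},j} = V_{x_{T,j},j}$ for $j = 1,\dots,6$. Comparing \eqref{eq:defn_Sac} with \eqref{eq:defn_Sa} immediately gives \eqref{eq:Sac_eq_sa}. The identity \eqref{eq:Sac_eq_Sc3} follows by the same argument, now using $\Vac_x = \Vc$ for $x \in \Cs$ and the definition \eqref{eq:defn_Sc3}.

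For the existence in (ii), the force consistency \eqref{eq:force_cons} together with translation invariance of $\Ea$ gives $\del\Eac(\yF) = 0 = \del\Ea(\yF)$ for every $\mF \in \R^{2 \times 2}$. Using the stress representations \eqref{eq:Sac_represenation} and \eqref{eq:atomstress} and subtracting, one obtains
\begin{displaymath}
  \sum_{T \in \T} |T|\, \b(\Sac(\yF; T) - \Sa(\yF; T)\b) : \Dc{T} u = 0 \qquad \forall u \in \Us_0,
\end{displaymath}
which is precisely the divergence-free condition \eqref{eq:kerdu}. Lemma \ref{th:divfree} then furnishes a field $\tilde\psi(\mF; \cdot) \in \CR(\T)^2$, unique modulo locally constant CR modes, such that $\Sac(\yF; \cdot) - \Sa(\yF; \cdot) = \D\tilde\psi(\mF; \cdot)\mJ$. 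Since $\Sa(\yF; T) = \D W(\mF)$ for every $T \in \T$, and by part (i) also $\Sac(\yF; T) = \D W(\mF)$ on $\Ta \cup \Tc$, the gradient of $\tilde\psi$ vanishes on $\Ta \cup \Tc$, forcing $\tilde\psi$ to be constant on each connected component there. Normalising these constants to zero (the far-field value is the natural choice) yields the unique $\psiac(\mF; \cdot)$ satisfying both \eqref{eq:defn_psi_1} and \eqref{eq:defn_psi_2}.

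For the Lipschitz estimate \eqref{eq:psi_Lip}, I first note that \eqref{eq:flat:facprf:5} expresses $\Sac(\yF; T) - \Sa(\yF; T)$ on each $T \in \Ti$ as an explicit linear combination of the quantities $V_{\mF,j}$ with coefficients bounded by $\bar{C}$, hence
\begin{displaymath}
  \b|\Sac(\yF; T) - \Sa(\yF; T) - \Sac(\yG; T) + \Sa(\yG; T)\b| \leq c\,\bar{C}\, M_2 |\mF - \mG|.
\end{displaymath}
Given $f \in \Fsi$, I construct a path of adjacent triangles in $\Ti$ connecting $f$ to a reference edge in $\Fsa \cup \Fsc$, where $\psiac(\mF; \cdot) - \psiac(\mG; \cdot)$ vanishes, and express the midpoint difference $\psiac(\mF; m_f) - \psiac(\mG; m_f)$ as a telescoping sum over this path using the CR linearity together with \eqref{eq:DzetaJ}. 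Assumption \ref{th:geninterface} forces the interface to form a thin band in which every triangle in $\Ti$ lies within a bounded number of edge-hops of a triangle in $\Ta \cup \Tc$; consequently the telescope involves only a constant number of per-triangle contributions, producing an $L_\ac$ that depends only on $\bar{C}$.

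The main obstacle is the last step: making the short-path construction uniform in the interface geometry. This will require enumerating the admissible local configurations of $\Ti$-triangles permitted by Assumption \ref{th:geninterface} (covering both flat stretches and all corner types), and verifying that in each case the unknown $\psiac$ values on $\Fsi$-midpoints can be solved for from the prescribed gradient on $T$ and the zero boundary data on $\Fsa \cup \Fsc$ within a bounded number of CR hops, independently of the overall size or shape of $\Is$.
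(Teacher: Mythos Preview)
Your treatment of part (i) and the existence step in part (ii) (divergence-freeness via the patch test, then Lemma~\ref{th:divfree}) are fine and match the paper. The real gap is in your argument for \eqref{eq:defn_psi_2}. You correctly deduce that $\D\tilde\psi(\mF;\cdot)$ vanishes on $\Ta\cup\Tc$ and hence that $\tilde\psi$ is constant on each connected component there. But Lemma~\ref{th:divfree} gives uniqueness of $\tilde\psi$ only up to a \emph{single global} additive constant (the mesh $\T$ is connected), not up to ``locally constant CR modes''. Since $\Ta$ and $\Tc$ are disjoint regions separated by the interface band $\Ti$, you can normalise $\tilde\psi$ to zero on one of them, but it is then a genuine question whether it is also zero on the other. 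This is precisely the point the paper flags as ``highly non-trivial'': a priori the corrector could pick up a nonzero offset when crossing the interface, i.e.\ the corrections could ``propagate'' into $\Tc$ (or $\Ta$). The paper resolves this not abstractly but by explicit computation in the appendix: it evaluates $\Sac(\yF;T)-\Sa(\yF;T)$ via \eqref{eq:Sac-Sa:hom} on every interface triangle (for flat interfaces and for each corner type), writes down $\psiac$ edge by edge, and checks directly that the contributions vanish on edges in $\Fsa\cup\Fsc$.

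Your telescoping-path idea for the Lipschitz bound \eqref{eq:psi_Lip} is a sound alternative to the paper's explicit formulas, and the thin-band structure of $\Ti$ does give a uniformly bounded path length. However, it presupposes that \eqref{eq:defn_psi_2} is already in hand to anchor the telescope at zero, so it does not stand on its own. In short: you still need the explicit case analysis (or some equivalent argument) to show the constants on $\Ta$ and $\Tc$ agree; once that is done, either your path argument or the paper's explicit edge formulas will deliver \eqref{eq:psi_Lip}.
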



\begin{proof}
  {\it (i) } Properties \eqref{eq:Sac_eq_sa} and \eqref{eq:Sac_eq_Sc3}
  follow immediately from the definitions of the three tensors and the
  sets $\Ta$ and $\Tc$, and are independent of the choice of the
  reconstruction parameters at the interface.

  {\it (ii) } Since $\Eac$ is assumed to satisfy local force
  consistency \eqref{eq:force_cons}, we have
  \begin{displaymath}
    0 = \b\< \del\Eac(y_\mF)-\del\Ea(y_\mF), u \b\>
    = \sum_{T\in\T} |T| (\Sac(y_\mF;T)-\Sa(y_\mF;T)):\Dc{T} u \qquad
    \forall u \in \Us_0,
  \end{displaymath}
  and hence $\Sac(y_\mF;\bullet)-\Sa(y_\mF; \bullet)$ is divergence
  free. According to Lemma \ref{th:divfree} there exists a function
  $\psiac \in \CR(\T)^2$, which is unique up to a constant shift, such
  that \eqref{eq:defn_psi_1} holds. Property \eqref{eq:defn_psi_2}
  uniquely determines the shift. 

  As a matter of fact, it is highly non-trivial whether
  \eqref{eq:defn_psi_2} can be satisfied, and it is in principle
  possible that the corrections ``propagate'' into the continuum
  region \cite{Ortner:2011:patch}. We postpone the detailed
  computations required to prove this to Appendix
  \ref{sec:flat:stress} and \ref{sec:corner:stress}, where we then
  also give a proof of the Lipschitz property~\eqref{eq:psi_Lip}.
\end{proof}


\subsection{The modified a/c stress}
The function $\psiac(\mF;\bullet)$ obtained in Lemma \ref{th:acstress}
provides the divergence-free corrector for $\Sac - \Sa$ for
homogeneous deformations. We now construct the corrector for nonlinear
deformations: First, for each $f \in \Fsi$, $f = T_+ \cap T_-$, we set
\begin{displaymath}
 \mF_f(y) := \smfrac12 \b( \Dc{T_+} y +
 \Dc{T_-} y \b).
\end{displaymath}
We can now define the corrector function for $y \in \Ys_0$ as
\begin{equation}
  \label{eq:2d:defn_modwh}
  \psiy(y; \bullet) := \sum_{f \in \Fsi} \psiac\b( \mF_f(y); m_f \b) \zeta_f
  + \sum_{f \in \Fsc} \psic\b(y; m_f\b) \zeta_f,
\end{equation}
and the corresponding modified stress function
\begin{equation}
  \label{eq:2d:defn_Sacm}
  \Sacm(y; T) := \Sac(y; T) - \D \psiy(y; T) \mJ, 
  \qquad \text{for } T \in \T.
\end{equation}
We show in Remark \ref{rem:correctors_necessary}, that $\psiy$ is
non-trivial, that is, there exists no choice of parameters for which
$\psiy = 0$, even under purely homogeneous deformations.

The properties of the modified stress function $\Sacm$ are summarized
in the following lemma.

\begin{lemma}
  \label{th:acstress}
  Let $\Sacm$ be defined by \eqref{eq:2d:defn_Sacm}, and $y \in
  \Ys_0$; then the following identities hold:
  \begin{align}
    \label{eq:sacm_representation}
    \< \del\Eac(y), u \> =~& \sum_{T \in \T} |T|
    \Sacm(y; T) : \Dc{T} u \qquad \forall u \in \Us_0; \\
    \label{eq:Sacm_eq_sa}
    \Sacm(y;T) =~& \Sa(y;T) \qquad \forall T \in \Ta; \\
    \label{eq:Sacm_eq_Sc2}
    \Sacm(y;T) =~& \Sc^2(y;T) \qquad \forall T \in \Tc; \quad \text{and} \\
    \label{eq:Sacm_hom}
    \Sacm(\yF; \bullet) =~& \Sa(\yF; \bullet) \qquad \forall \mF \in
    \R^{2 \times 2}.
  \end{align}
  Moreover, there exists a constant $\hat{L}_\ac$, which depends only on
  $\bar{C}$, such that
  \begin{equation}
    \label{eq:Sacm_lip}
    \b| \Sacm(y; T) - \Sacm(\yF; T) \b| \leq \hat{L}_\ac M_2 \|D^2 y
    \|_{\ell^\infty(T \cap \L)}
    \qquad \forall T \in \Ti, \quad \mF = \Dc{T}y.
  \end{equation}
\end{lemma}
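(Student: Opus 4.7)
The overall plan is to verify \eqref{eq:sacm_representation}--\eqref{eq:Sacm_hom} using the algebraic properties already built into $\psiac$ and $\psic$ via Lemmas \ref{th:ac_stress}(ii) and \ref{th:psic}, and then establish the Lipschitz bound \eqref{eq:Sacm_lip} by splitting $\Sacm(y;T) - \Sacm(\yF;T)$ into a ``raw'' stress difference plus a corrector difference and bounding each using the neighbourhood $T \cap \L$ only.

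For \eqref{eq:sacm_representation}, I would start from \eqref{eq:Sac_represenation}, observe that $\psiy(y;\cdot) \in \CR(\T)^2$, and apply Lemma \ref{th:divfree} to conclude that $\sum_{T} |T|\,\D\psiy(y;T)\mJ : \Dc{T} u = 0$ for every $u \in \Us_0$; subtracting this zero sum gives the claim. For \eqref{eq:Sacm_eq_sa}, every edge of $T \in \Ta$ lies in $\Fsa$, so by \eqref{eq:2d:defn_modwh} all three edge-midpoint values of $\psiy(y;\cdot)$ on $\partial T$ vanish; hence $\psiy(y;\cdot)|_T \equiv 0$ and \eqref{eq:Sac_eq_sa} closes the case. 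For \eqref{eq:Sacm_eq_Sc2}, every edge of $T \in \Tc$ lies in $\Fsc$, so only the $\psic$-sum in \eqref{eq:2d:defn_modwh} contributes on $T$; Lemma \ref{th:psic} together with \eqref{eq:Sac_eq_Sc3} then yields $\Sacm(y;T) = \Sc^3(y;T) - [\Sc^3(y;T) - \Sc^2(y;T)] = \Sc^2(y;T)$.

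Identity \eqref{eq:Sacm_hom} requires a little more care: for $y = \yF$ one has $\mF_f(\yF) = \mF$ on every $f \in \Fsi$, while the explicit formula \eqref{eq:defn_psic} (or equivalently the Lipschitz bound \eqref{eq:psic_lipprop}) gives $\psic(\yF;\cdot) \equiv 0$; combined with \eqref{eq:defn_psi_2} this shows $\psiy(\yF;m_f) = \psiac(\mF;m_f)$ at every edge midpoint, so the two CR functions coincide, and \eqref{eq:defn_psi_1} yields $\Sacm(\yF;T) = \Sac(\yF;T) - \D\psiac(\mF;T)\mJ = \Sa(\yF;T)$.

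The Lipschitz bound \eqref{eq:Sacm_lip} is the main obstacle, since it must be proved with the tight neighbourhood $T \cap \L$ and involves tracking both corrector types on the interface. Setting $\mF := \Dc{T} y$ and invoking \eqref{eq:Sacm_hom}, I would write
\[
\Sacm(y;T) - \Sacm(\yF;T) = \bigl[\Sac(y;T) - \Sac(\yF;T)\bigr] - \D\bigl[\psiy(y;\cdot) - \psiy(\yF;\cdot)\bigr](T)\,\mJ.
\]
For the first bracket I would apply \eqref{eq:Lip_partialV} to $\pp_j \Vac$ (whose effective constant is controlled by $\bar C M_2$), reducing the estimate to the six expressions $\Da{i} y(x_{T,j}) - \mF a_i$; because $\mF = \Dc{T}y$, each such expression either vanishes (when $a_i$ lies along an edge of $T$ emanating from $x_{T,j}$) or, via the relation $a_j = a_{j-1}+a_{j+1}$, reduces to a single mixed second-difference $\pm D_i D_k y(x_{T,j})$ with $x_{T,j} \in T \cap \L$. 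For the second bracket I would use \eqref{eq:DzetaJ} to express the CR-gradient as a bounded combination of the three edge-midpoint increments of $\psiy$ and treat each edge $f \subset \partial T$ separately: $f \in \Fsa$ contributes nothing by construction; $f \in \Fsc$ is controlled by \eqref{eq:psic_lipprop} together with $\psic(\yF;\cdot) \equiv 0$; and for $f \in \Fsi$ the property \eqref{eq:psi_Lip} reduces the increment to $L_\ac M_2\,|\mF_f(y) - \mF|$, which is half a jump $\Dc{T'}y - \Dc{T}y$ across an edge of $T$ and is thus bounded by $M_2|D^2 y(x_{T,j})|$ at one vertex of $f$. Collecting these estimates produces a constant $\hat{L}_\ac$ depending only on $\bar C$ through $L_\ac$ and the one-sidedness structure.
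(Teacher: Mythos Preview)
Your proposal is correct and follows essentially the same route as the paper's own proof: the four identities are obtained exactly as you describe (divergence-freeness of the CR corrector for \eqref{eq:sacm_representation}, vanishing of $\psiy$ on $\Fsa$ for \eqref{eq:Sacm_eq_sa}, $\psiy=\psic$ on $\Tc$ for \eqref{eq:Sacm_eq_Sc2}, and $\psiy(\yF;\cdot)=\psiac(\mF;\cdot)$ for \eqref{eq:Sacm_hom}), and the Lipschitz bound is proved via the identical splitting into $\Sac(y;T)-\Sac(\yF;T)$ plus the corrector-gradient difference, followed by the same edge-by-edge case analysis $f\in\Fsa,\Fsc,\Fsi$. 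One small slip: in your $f\in\Fsi$ case the quantity $|\mF_f(y)-\mF|$ is bounded by a constant times $|D^2 y(x_{T,j})|$, not by $M_2|D^2 y(x_{T,j})|$; the $M_2$ is already carried by the factor $L_\ac M_2$ from \eqref{eq:psi_Lip}.
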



\begin{proof}
  Identity \eqref{eq:sacm_representation} follows from
  \eqref{eq:Sac_represenation} and the fact that $\Sacm - \Sac$ is
  divergence-free.

  Identity \eqref{eq:Sacm_eq_sa} follows from \eqref{eq:Sac_eq_sa} and
  the fact that $\psiy(y; m_f) = 0$ for all $f \in \Fsa$, which
  implies that $\Sacm(y; T) = \Sac(y; T) = \Sa(y; T)$ for all $T \in
  \Ta$. Similarly, \eqref{eq:Sacm_eq_Sc2} follows from~\eqref{eq:Sac_eq_Sc3}, and the fact that $\psiy = \psic$ in all
  elements $T \in \Tc$.

  Fix $\mF \in \R^{2 \times 2}$.  To prove \eqref{eq:Sacm_hom} we
  first note that, since $\psic(\yF; \bullet) = 0$, we have
  $\psiy(\yF; \bullet) = \psiac(\mF; \bullet)$. Using
  \eqref{eq:defn_psi_1}, we obtain
  \begin{displaymath}
    \Sacm(\yF; \bullet) = \Sac(\mF; \bullet) - \D\psiac(\mF; \bullet)
    \mJ = \Sa(\yF; \bullet).
  \end{displaymath}

  We are only left to prove the Lipschitz property
  \eqref{eq:Sacm_lip}. With $\mF := \Dc{T} y$, we have
  \begin{equation}
    \label{eq:Sacm_lip:1}
    \b| \Sacm(y; T) - \Sacm(\yF; T) \b| \leq 
    \b| \Sac(y; T) - \Sac(\yF; T) \b| + \b|\D\psiy(y; T) - \D \psiy(\yF; T)\b|.
  \end{equation}
  From its definition \eqref{eq:defn_Sac}, and the fact that second
  partial derivatives of $V$ are globally bounded, it is clear that
  $\Sac$ satisfies a Lipschitz property of the form
  \begin{equation}
    \label{eq:Sacm_lip:2}
    \b| \Sac(y; T) - \Sac(\yF; T) \b| \leq L_1 M_2  \| D^2 y
    \|_{\ell^\infty(T \cap \L)}
  \end{equation}
  where $L_1$ depends only on $\bar{C}$; see also \cite[Lemma
  19]{Ortner:2011:patch} for a similar result.  (If the reconstruction
  parameters satisfy the one-sidedness condition
  \eqref{eq:simple_Cxji}, as well as the patch test conditions
  \eqref{eq:energy_cons}, \eqref{eq:force_cons}, one may show that
  $L_1=3\bar{C}/\Omega_0$.)

  To bound the second term on the right-hand side in
  \eqref{eq:Sacm_lip:1} we invoke the inverse inequality
  \begin{displaymath}
    \b|\D\psiy(y; T) - \D \psiy(\yF; T)\b| \leq \frac{2}{\Omega_0}
    \sum_{\substack{f \in \Fs \\f \subset T}} \b| \psiy(y; m_f) -
    \psiy(\yF; m_f) \b|,
  \end{displaymath}
  where we used the fact that $|\D\zeta_f| = 2/\Omega_0$ for all $f
  \in \Fs$. If $f \in \Fsa$, then $\psiy(\bullet; m_f) = 0$. If $f \in
  \Fsc$, then $\psiy(\bullet; m_f) = \psic(\bullet; m_f)$ and hence,
  using \eqref{eq:psic_lipprop},
  \begin{displaymath}
     \b| \psiy(y; m_f) -
    \psiy(\yF; m_f) \b| = \b| \psic(y; m_f) \b| \leq \smfrac16 M_2 \|
    D^2 y \|_{\ell^\infty(T \cap \L)}.
  \end{displaymath}
  If $f \in \Fsi$, then $\psiy(y; m_f) = \psiac(\mF_f; m_f)$ and
  $\psiy(\yF; m_f) = \psiac(\mF; m_f)$. We can therefore employ
  \eqref{eq:psi_Lip} to estimate
  \begin{align*}
    \b| \psiy(y; m_f) -
    \psiy(\yF; m_f) \b| =~& \b| \psiac(\mF_f; m_f) -
    \psiac(\mF; m_f) \b| \\
    \leq~& L_\ac M_2 \b| \mF_f - \mF \b|
    \leq \smfrac{L_\ac M_2}{2 \Omega_0} \| D^2 y \|_{\ell^1(T \cap \L)}.
  \end{align*}
  The last inequality can be verified through straightforward
  geometric arguments. Without explicit constants its validity is
  obvious.

  Combining the two foregoing estimates, we obtain
  \begin{equation}
    \label{eq:Sacm_lip:3}
    \b|\D\psiy(y; T) - \D \psiy(\yF; T)\b| \leq   c \max(L_\ac, 1) M_2 \| D^2 y \|_{\ell^\infty(\L \cap T)}.
  \end{equation} 
  Combining \eqref{eq:Sacm_lip:1}, \eqref{eq:Sacm_lip:2} and
  \eqref{eq:Sacm_lip:3}, yields \eqref{eq:Sacm_lip}.
\end{proof}

\subsection{Proof of Theorem  \ref{th:2d:stress_err_Is}}
\label{sec:main_proof}
With the preparations of the foregoing sections it is now easy to
complete the proof of the main consistency result, Theorem
\ref{th:2d:stress_err_Is}. Again, we drop the dependence on $y$
whenever possible. We begin by splitting the consistency error into a
continuum contribution and an interface contribution,
\begin{align*}
  \< \del \Eac - \del \Ea, u \> 
  =~& \sum_{T\in\T} |T|\b[\Sacm(T)-\Sa(T)\b]:\Dc{T} u\\
  =~& \sum_{T\in\Tc} |T|\b[\Sacm(T)-\Sa(T)\b]:\Dc{T} u + \sum_{T\in\Ti}
  |T|\b[\Sacm(T)-\Sa(T)\b]:\Dc{T} u \\
  =:~& {\rm E}_\Cs + {\rm E}_\Is,
\end{align*}
and estimate ${\rm E}_\Cs$ and ${\rm E}_\Is$ separately. Note also
that we used \eqref{eq:Sacm_eq_sa} to drop the sum over elements in
the atomistic region.

Using the fact that $\Sacm = \Sc^2$ in $\Tc$, \eqref{eq:Sacm_eq_Sc2},
and the stress estimate \eqref{eq:stress_cb_o2}, we obtain
\begin{align}
  \notag
  {\rm E}_\Cs \leq~& \sum_{T \in \Tc} |T| \b| \Sc^2(T) - \Sa(T)
  \b| \, |\Dc{T} u| \\
  \notag
  \leq~& c \B( \sum_{x \in \Cs} \b[M_2 |D^3 y(x)| + 
  M_3 |D^2 y(x)|^2 \b]^p \B)^{1/p} \B( \sum_{T \in \Tc} |T| |\Dc{T} u|^{p'}\B)^{1/p'}
  \\
  \label{eq:est_ECs}
  \leq~& c \B( M_2 \|D^3 y\|_{\ell^p(\Cs)} 
  + M_3 \|D^2 y \|_{\ell^{2p}(\Cs)}^{2} \B)\,
  \B( \sum_{T \in \T} |T| |\Dc{T} u|^{p'}\B)^{1/p'}.
\end{align}

To estimate ${\rm E}_\Is$, we employ the Lipschitz property
\eqref{eq:Sacm_lip} for $\Sacm$ and the fact that $\Sac = \Sa$ under
homogeneous deformations (see \eqref{eq:Sacm_hom}). Using
\eqref{eq:Lip_partialV} it is also straightforward to prove
\begin{equation}
  \label{eq:lip_Sa}
  \b| \Sa(y; T) - \Sa(\yF; T) \b| \leq \smfrac{1}{\Omega_0} M_2 \| D^2
  y \|_{\ell^\infty(T \cap \L)} \qquad \forall T \in \T,  \quad \mF =
  \Dc{T} y.
\end{equation}
Using \eqref{eq:Sacm_hom}, \eqref{eq:Sacm_lip}
and \eqref{eq:lip_Sa},we obtain, for any $T \in \Ti$,
\begin{align*}
  \b| \Sacm(y; T) - \Sa(y; T) \b| \leq~& \b| \Sacm(y; T) - \Sacm(\yF; T)
  \b| + \b| \Sa(y; T) - \Sa(\yF; T) \b| \\
  \leq~& \hat{L}_\ac M_2 \| D^2 y \|_{\ell^\infty(T \cap \L)} + \smfrac{1}{\Omega_0} M_2
  \| D^2 y \|_{\ell^\infty(T \cap \L)},
\end{align*}
and summing over $T \in \Ti$ yields
\begin{align}
  \notag
  {\rm E}_\Is \leq~& \sum_{T \in \Ti} |T| (\hat{L}_\ac + \smfrac{1}{\Omega_0}) M_2 \|D^2
  y \|_{\ell^\infty(T \cap \L)} \, |\Dc{T} u| \\
  \label{eq:est_EIs}
  \leq~& c C_\Is M_2 \|D^2y\|_{\ell^p(\Isext)}
  \B( \sum_{T \in \T} |T| \,|\Dc{T} u |^{p'} \B)^{1/p'},
\end{align}
where $C_\Is$ depends only on $\hat{L}_\ac$, which depends only on
$\bar{C}$.

Combining \eqref{eq:est_EIs} and \eqref{eq:est_ECs} we finally obtain
the desired consistency error estimate \eqref{eq:main_cons_est}. This
concludes the proof of Theorem \ref{th:2d:stress_err_Is}.


\section{Appendix: Proof of Lemma \ref{th:ac_stress} (ii)}
In this appendix, we provide the remaining details for the proof of
\mbox{Lemma \ref{th:ac_stress} {\it (ii)}}. Throughout
this proof we fix a homogeneous deformation $\yF$, $\mF \in \R^{2
  \times 2}$, and drop the argument $y = \yF$ whenever possible. For
example, we will write $\Sac(T) = \Sac(\yF; T)$.

We begin by computing an expression for $\Sac - \Sa$ in terms of the
parameters $C_{x,j}$. Equation \eqref{eq:flat:facprf:5}, in the proof
of Lemma \ref{th:fac_v1}, can be rewritten in the form
\begin{align*}
  \Vac_{x,\mF,j} := \pp_j \Vac_x(\mF {\bf a})
  =(1-C_{x,j-1})V_{\mF,j-1}+C_{x,j}V_{\mF,j}+(1-C_{x,j+1})V_{\mF,j+1}.
\end{align*}
Recalling also \eqref{eq:defn_Sa} and using $a_j = a_{j-1} +
a_{j+1}$, we obtain
\begin{align}
  \notag
  \Sac(T)-~&\Sa(T) = \sum_{j=1}^6 \b[ \Vac_{x_{T,j},\mF,j} -
  V_{\mF, j} \b] \otimes a_j  \\
  \notag
  =~&
  \frac{1}{\Omega_0}\sum_{j=1}^6\b[(1-C_{x_{T,j},j-1})V_{\mF,j-1}+(C_{x_{T,j},j}-1)V_{\mF,j}+(1-C_{x_{T,j},j+1})V_{\mF,j+1}\b]\otimes
  a_j\\
  \notag
  =~&
  \frac{1}{\Omega_0}\sum_{j=1}^6V_{\mF,j}\otimes\b[(1-C_{x_{T,j-1},j})a_{j-1}+(C_{x_{T,j},j}-1)a_j+(1-C_{x_{T,j+1},j})a_{j+1}\b]\\
  \label{eq:Sac-Sa:hom}
  =~&
  \frac{1}{\Omega_0}\sum_{j=1}^6V_{\mF,j}\otimes\b[-C_{x_{T,j-1},j}a_{j-1}+C_{x_{T,j},j}a_j-C_{x_{T,j+1},j}a_{j+1}\b].
\end{align}
The explicit evaluation of \eqref{eq:Sac-Sa:hom} for interface
elements is carried out separately for flat interfaces and interfaces
with corners.

For triangles not intersecting the interface,
$\Sac(y_\mF;T)-\Sa(y_\mF;T)=0$, hence we need to compute the stress
errors only for interface elements.

\subsection{Flat interface}
\label{sec:flat:stress}
%
%

Consider the flat interface configuration in Figure
\ref{fig:interfacetriangle}. According to
\eqref{eq:interface_C_econs1} and \eqref{eq:interface_C_econs2} the
free parameters are $c_j := C_{x,j}$ (for $x \in \Is$ and $j \in
\{2,3,5,6\}$), and $d_i, i \in \Z$, where $d_1 = C_{x_{T_2,1}, 1} =
C_{x_{T_2,4},4}$, and so forth. We calculate the a/c stress for the
elements $T_1, T_2$, and collect the results in Table
\ref{tab:coefficients}.

\begin{figure}
  \begin{center}
    \includegraphics[height= 0.25\textwidth]{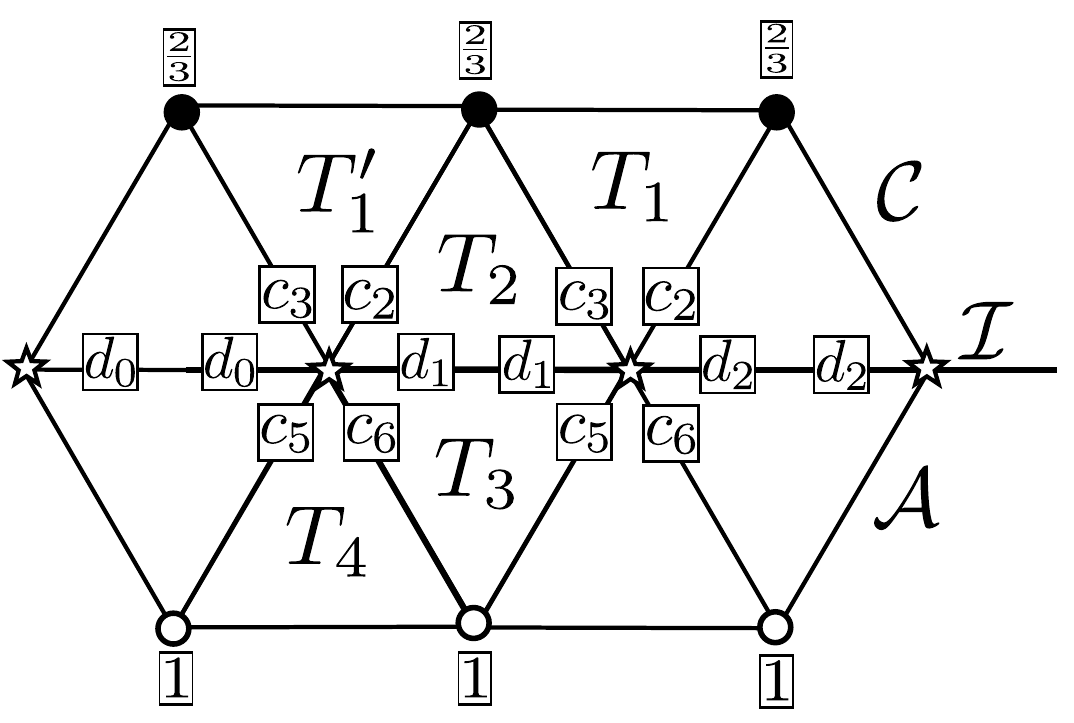}
   \end{center}
   \caption{Visualisation of the flat interface analysis in
     \S\ref{sec:flat:stress}.}
\label{fig:interfacetriangle}
\end{figure}

\begin{table}
  \caption{Table of coefficients of $V_{\mF, j}$ in
    \eqref{eq:Sac-Sa:hom}, in interfacial triangles, on flat
    interfaces.} \footnotesize
\begin{tabular}{|c|c|c|c|c|rl|}
\hline
&$j$&$C_{x_{T,j-1},j}$&$C_{x_{T,j},j}$&$C_{x_{T,j+1},j}$&
\multicolumn{2}{|c|}{$-C_{x_{T,j-1},j}a_{j-1}
    +C_{x_{T,j},j}a_j-C_{x_{T,j},j+1}a_{j+1}$} \\
\hline
\multirow{6}{*}{$T_1$}
& 1 & $\frac{2}{3}$ & $\frac{2}{3}$ & $d_2$         &
$-\frac{2}{3}a_6+\frac{2}{3}a_1-d_2a_2=$&\!\!\!\!$(\frac{2}{3}-d_2)a_2$
\\
& 2 & $\frac{2}{3}$ & $c_2$         & $c_2$         &
$-\frac{2}{3}a_1+c_2a_2-c_2a_3=$&\!\!\!\!$(-\frac{2}{3}+c_2)(a_2-a_3)$ \\
& 3 & $c_3$         & $c_3$         & $\frac{2}{3}$ &
$-c_3a_2+c_3a_3-\frac{2}{3}a_4=$&\!\!\!\!$(\frac{2}{3}-c_3)(a_2-a_3)$  \\
& 4 & $d_1$         & $\frac{2}{3}$ & $\frac{2}{3}$ &
$-d_1a_3+\frac{2}{3}a_4-\frac{2}{3}a_5=$&\!\!\!\!$(\frac{2}{3}-d_1)a_3$\\
& 5 & $\frac{2}{3}$ & $\frac{2}{3}$ & $\frac{2}{3}$ &
&\!\!\!\! 0                                                          \\
& 6 & $\frac{2}{3}$ & $\frac{2}{3}$ & $\frac{2}{3}$ &
&\!\!\!\! 0                                                          \\
\hline
\multirow{6}{*}{$T_2$}
& 1 & $\frac{2}{3}$ & $d_1$         & $d_1$         &
$-\frac{2}{3}a_6+d_1a_1-d_1a_2=$&\!\!\!\!$(\frac{2}{3}-d_1)a_3$        \\
& 2 & $c_2$         & $c_2$         & $c_2$         &
&\!\!\!\! 0                                                          \\
& 3 & $c_3$         & $c_3$         & $c_3$         &
&\!\!\!\! 0                                                          \\
& 4 & $d_1$         & $d_1$         & $\frac{2}{3}$ &
$-d_1a_3+d_1a_4-\frac{2}{3}a_5=$&\!\!\!\!$(\frac{2}{3}-d_1)a_2$        \\
& 5 & $c_5$         & $\frac{2}{3}$ & $\frac{2}{3}$ &
$-c_5a_4+\frac{2}{3}a_5-\frac{2}{3}a_6=$&\!\!\!\!$(c_5-\frac{2}{3})a_1$\\
& 6 & $\frac{2}{3}$ & $\frac{2}{3}$ & $c_6$         &
$-\frac{2}{3}a_5+\frac{2}{3}a_6-c_6a_1=$&\!\!\!\!$(\frac{2}{3}-c_6)a_1$ \\
\hline
\end{tabular}
\label{tab:coefficients}
\end{table}

From Table \ref{tab:coefficients} we can read off the stress
differences $\Sac-\Sa$ in the elements $T_1, T_2$:
\begin{align*}
  \Sac(T_1)-\Sa(T_1) =  \b\{(\smfrac{2}{3}-d_2)V_{\mF,1}
  +(c_2-\smfrac{2}{3})V_{\mF,2}+(\smfrac{2}{3}-c_3)V_{\mF,3}\b\}&
  \otimes \smfrac{a_2}{\Omega_0} \\
   + \b\{(d_1-\smfrac{2}{3})V_{\mF,1}+(\smfrac{2}{3}-c_2)V_{\mF,2}
  +(c_3-\smfrac{2}{3})V_{\mF,3}\b\}&
  \otimes \smfrac{a_3}{\Omega_0},
\end{align*}
and
\begin{align*}
  \Sac(T_2)-\Sa(T_2)=\b\{(d_1-\smfrac{2}{3})V_{\mF,1}
  +(\smfrac{2}{3}-c_5)V_{\mF,2}+(c_6-\smfrac{2}{3})V_{\mF,3}\b\}
  & \otimes \smfrac{a_1}{\Omega_0} \\
  =
  \b\{(d_1-\smfrac{2}{3})V_{\mF,1}+(\smfrac{2}{3}-c_2)V_{\mF,2}
  +(c_3-\smfrac{2}{3})V_{\mF,3}\b\} & \otimes
  \smfrac{a_2}{\Omega_0} \\
  -
  \b\{(d_1-\smfrac{2}{3})V_{\mF,1}+(\smfrac{2}{3}-c_2)V_{\mF,2}
  +(c_3-\smfrac{2}{3})V_{\mF,3}\b\} & \otimes \smfrac{a_3}{\Omega_0} \\
  + \b\{(c_2-c_5)V_{\mF,2}+(c_6-c_3)V_{\mF,3}\b\} &
  \otimes \smfrac{a_1}{\Omega_0}.
\end{align*}
Note that we have provided two alternative representations of
$\Sac(T_2)-\Sa(T_2)$, since the first representation is in general
insufficient to construct the corrector.

Since the atomistic region is a mirror image of the continuum region
with respect to the interface, we can obtain stress function
$\Sac(y_\mF;\cdot)$ for $T_3$ and $T_4$ from symmetry considerations:
\begin{align*}
  \Sac(T_4)-\Sa(T_4) =
  \b\{(1-d_0)V_{\mF,1}+(c_5-1)V_{\mF,2}+(1-c_6)V_{\mF,3}\b\} 
  & \otimes \smfrac{a_2}{\Omega_0} \\  
   + \b\{(d_1-1)V_{\mF,1}+(1-c_5)V_{\mF,2}+(c_6-1)V_{\mF,3}\b\}
  & \otimes \smfrac{a_3}{\Omega_0},
\end{align*}
and
\begin{align*}
  \Sac(T_3)-\Sa(T_3)=\b\{(d_1-1)V_{\mF,1}+(1-c_2)V_{\mF,2}+(c_3-1)V_{\mF,3}\b\}
  & \otimes \smfrac{a_1}{\Omega_0}\\
  =\b\{(d_1-1)V_{\mF,1}+(1-c_5)V_{\mF,2}+(c_6-1)V_{\mF,3}\b\}
  & \otimes \smfrac{a_2}{\Omega_0} \\
  -\b\{(d_1-1)V_{\mF,1}+(1-c_5)V_{\mF,2}+(c_6-1)V_{\mF,3}\b\}
  & \otimes \smfrac{a_3}{\Omega_0} \\
  -\b\{(c_2-c_5)V_{\mF,2}+(c_6-c_3)V_{\mF,3}\b\} & \otimes \smfrac{a_1}{\Omega_0}.
\end{align*} 

From the proof Lemma \ref{th:psic} recall that $\D\zeta_f(T) \mJ = -
\smfrac{2}{\Omega_0} a_j$ if $f$ is an edge of $T$ and $a_j$ the
counter-clockwise direction of the edge (relative to $T$). We can
therefore choose $\psiac$ explicitly, for example, for $f=T_1\cap
T_2$:
\begin{equation}
  \label{eq:psiac_1}
  \psiac(\mF;m_f) := \smfrac12 \b\{(d_1-\smfrac{2}{3}) V_{\mF,1} +
  (\smfrac{2}{3}-c_2) V_{\mF,2} + (c_3-\smfrac{2}{3}) V_{\mF,3} \b\}.
\end{equation}
For the remaining edges, similar choices can be made, the crucial
observation being that the terms in neighbouring elements associated
with an edge cancel each other out.

We observe, moreover, that for the triangles $T_1$ and $T_4$, the
$a_1$ components of the stresses vanish, which means that
$\psiac(\mF;m_f)=0$ for all $f\in\Fsa\cup\Fsc$. This proves
\eqref{eq:defn_psi_2} in the flat interface case.

It remains to prove the Lipschitz bound \eqref{eq:psi_Lip}. From
\eqref{eq:psiac_1} (and the corresponding formulas for the remaining
edges), it is straightforward to show that $\psiac$ is Lipschitz
continous for any fixed set of parameters with a Lipschitz constant of
the form $L M_2$, where $L$ can be bounded in terms of $\bar{C}$. This
concludes the proof of Lemma \ref{th:ac_stress} (ii) in the flat
interface case.


\begin{remark}[Correctors are neccessary]
  \label{rem:correctors_necessary}
  From the calculation in this section, it is clear that one cannot
  choose parameters such that $\Sac(\yF;T)=\Sa(\yF;T)$ for all $T \in
  \T$ and for all potentials $V \in \Vs$. For example, if $\Sac(\yF;
  T_2) = \Sa(\yF; T_2)$ for all $V$, then $d_1 = 2/3$, whereas if
  $\Sac(\yF; T_3) = \Sa(\yF; T_3)$, then $d_1 = 1$. This demonstrates
  that the divergence-free corrector fields are in fact necessary, and
  that it is impossible in our current framework to construct an a/c
  method where $\Sac(\yF; T) = \Sa(\yF; T)$ holds for all $T \in \T, \mF
  \in \R^{2\times 2}$, and $V \in \Vs$.
\end{remark}

\subsection{General interface}
\label{sec:corner:stress}
We now turn to the proof of \eqref{eq:defn_psi_1}--~\eqref{eq:psi_Lip}
for interface configurations with corners. Consider the corner
configuration displayed in Figure \ref{fig:cornertriangle}, which is
concave from the point of view of the atomistic region. The
reconstruction coefficients found in
Proposition~\ref{th:params:corner} are displayed in the figure as
well. Recall that the reconstructions of bonds into the atomistic or
continuum regions are now uniquely determined, while the bonds lying
at the interfaces (parameters $a$ and $b$) are still free.

\begin{figure}
  \begin{center}
    \includegraphics[width=6cm]
    {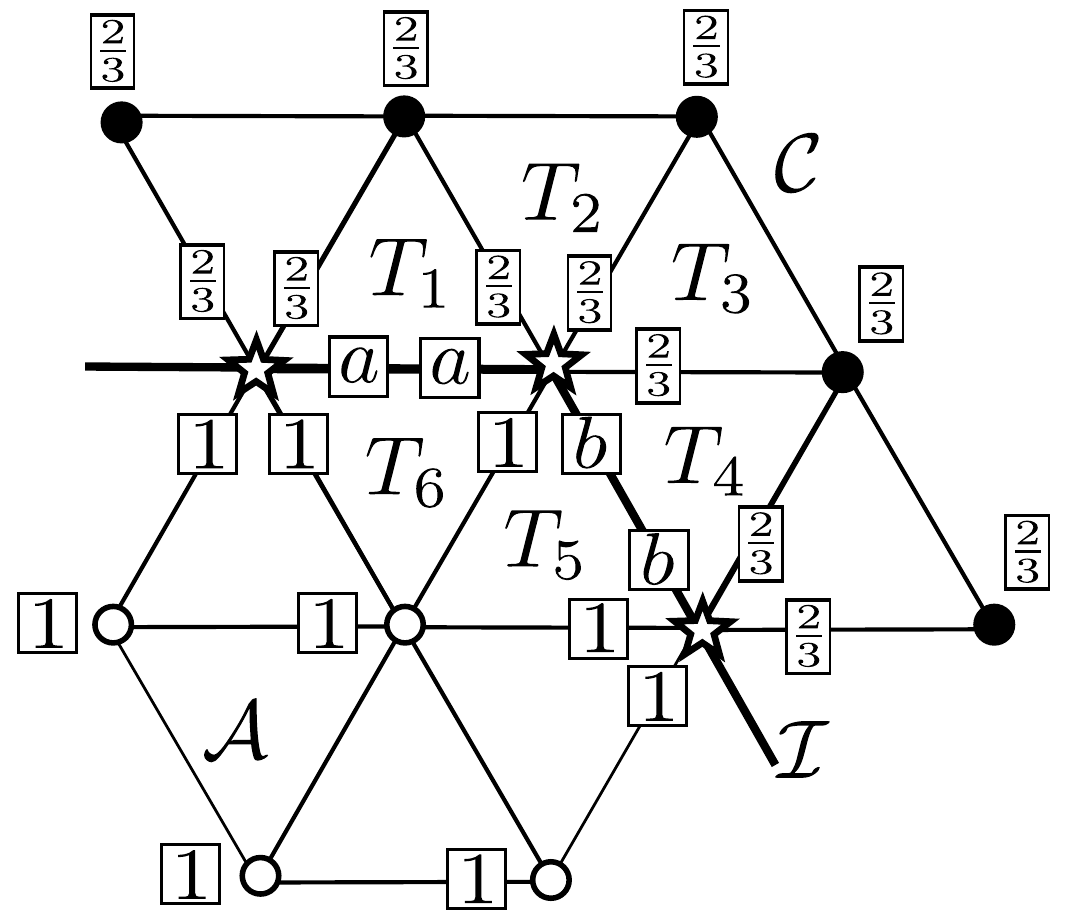}
   \end{center}
   \caption{Interface configuration with corner.}
\label{fig:cornertriangle}
\end{figure}

Using \eqref{eq:Sac-Sa:hom}, and defining $a_j' := a_j / \Omega_0$,
the stress errors $\Sac - \Sa$ in the elements $T_1, \dots, T_6$ can
again be computed explicitly:
\begin{align*}
  \Sac(T_1)-\Sa(T_1)=~& (\smfrac{1}{3}V_{\mF,3}
  -\smfrac{1}{3}V_{\mF,2}) \otimes a_1' + (a-\smfrac{2}{3})V_{\mF,1}\otimes a_2'-(a-\smfrac{2}{3})V_{\mF,1}\otimes a_3',\\
  \Sac(T_2)-\Sa(T_2)=~&(a-\smfrac{2}{3})V_{\mF,1}\otimes a_3',\\
  \Sac(T_3)-\Sa(T_3)=~&(b-\smfrac{2}{3})V_{\mF,3}\otimes a_1',\\
  \Sac(T_4)-\Sa(T_4)=~&-(b-\smfrac{2}{3})V_{\mF,3}\otimes a_1'+(b-\smfrac{2}{3})V_{\mF,3}\otimes a_2'+(\smfrac{1}{3}V_{\mF,1}-\smfrac{1}{3}V_{\mF,2})\otimes a_3',\\
  \Sac(T_5)-\Sa(T_5)=~&(\smfrac{1}{3}V_{\mF,2}-\smfrac{1}{3}V_{\mF,1})\otimes
  a_3' +[(1-a)V_{\mF,1}+(b-1)V_{\mF,3}]\otimes a_2' \\
  & - (b-1)V_{\mF,3}\otimes a_1', \quad \text{and} \\
  \Sac(T_6)-\Sa(T_6)=~&(\smfrac{1}{3}V_{\mF,2}-\smfrac{1}{3}V_{\mF,3})\otimes
  a_1'+\b[(a-1)V_{\mF,1}+(1-b)V_{\mF,3}\b]\otimes a_2'\\
  & -(a-1)V_{\mF,1}\otimes a_3'.
\end{align*}
Following the argument in \S\ref{sec:flat:stress}, we can check again
that the associated edge contributions from neighbouring elements
cancel, and hence we can explicitly construct the corrector function
$\psiac$. Note that $\Sac(T_2)$ has no $a_1'$ component and
$\Sac(T_5)$ has no $a_3'$ component, which implies
\eqref{eq:defn_psi_2}.

For a corner that is convex from the point of view of the atomistic
region, the result follows by symmetry (interchanging the coefficients
$1$ and $\smfrac23$). The Lipschitz bound \eqref{eq:psi_Lip} can be
obtained from the above formulas, under the assumption that the
reconstruction coefficients $a, b$ are bounded above by $\bar{C}$.


Finally, we have to convince ourselves that our above argument applies
to all possible interface geometries. In Figure \ref{fig:corners} we
present an exhaustive list, up to translations, rotations and
reflections, of local interface geometries. (Recall our geometric
requirements formulated in Assumption \ref{th:geninterface}.) By
inspecting the calculation of the stress differences $\Sac - \Sa$ for
the case presented in Figure \ref{fig:cornertriangle}, one observes
that the formulas are local, and do not depend on the extended
geometry of the interface. We note, however, that this only holds due
to the separation Assumption \ref{th:geninterface}. The subsequent
construction of the corrector now follow of course verbatim.

\begin{figure}
  \begin{center}
    \includegraphics[width=0.6\textwidth]{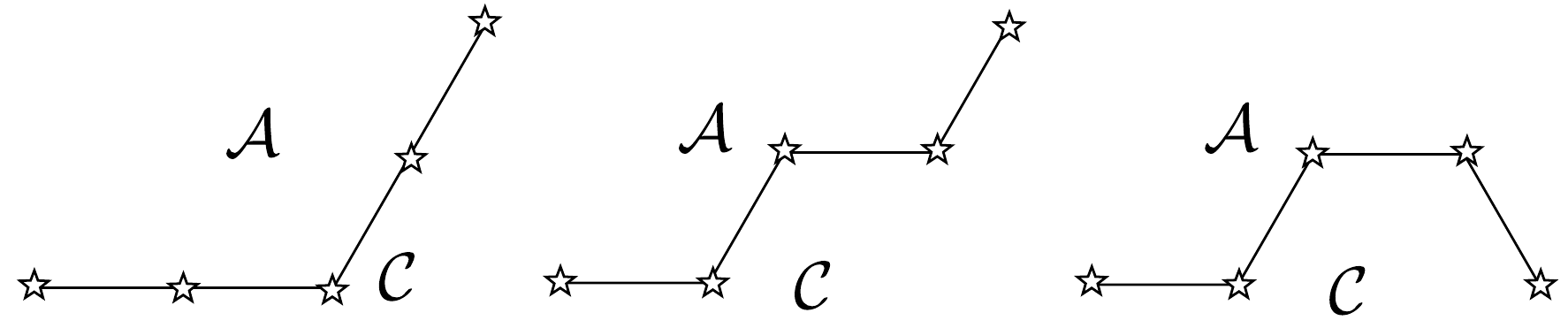}
   \end{center}
   \caption{All possible corner configurations (up to translation,
     rotation and reflection).}
\label{fig:corners}
\end{figure}

This concludes the proof of Lemma \ref{th:ac_stress} {\it (ii)} in the
general interface case.

\section{Conclusion}
We have shown for a 2D model problem that it is possible to construct
patch test consistent a/c coupling method for multi-body potentials,
in interface geometries with corners, using a new variant of the
geometry reconstruction technique introduced in \cite{E:2006,
  Shimokawa:2004}, which we labelled the GR-AC method. Moreover, we
have proven a quasi-optimal consistency error estimate for the GR-AC
method(s) we constructed.

We see this work as a first step towards a general theory of GR-AC
method(s). Our goal is to show eventually that the free parameters in the
method can {\it always} (that is, in any dimension, for any interface
geometry) be determined so as to satisfy the energy and force
consistency conditions, and that the resulting GR-AC method(s) will
have the same consistency properties that we establish in the present
case.

An important issue that we have left entirely open in the present work
is the stability of the GR-AC method: Under which conditions on the
reconstruction parameters does the GR-AC method have sharp stability
properties as discussed in \cite{Dobson:qce.stab}?  This issue is the
topic of ongoing research.

\bibliographystyle{plain}
\bibliography{qc}

\end{document}